\newtheorem{thm}{Theorem}[section]
\newtheorem{prop}[thm]{Proposition}
\theoremstyle{definition}
\newtheorem{defn}[thm]{Definition} 
\theoremstyle{remark}
\numberwithin{equation}{section}
\newcommand{\spacedvert}{\,\vert\,}
\newcommand{\fO}{\mathfrak{O}}
\newcommand{\fa}{\mathfrak{a}}
\newcommand{\fb}{\mathfrak{b}}
\newcommand{\fc}{\mathfrak{c}}
\newcommand{\fg}{\mathfrak{g}}
\newcommand{\fm}{\mathfrak{m}}
\newcommand{\fn}{\mathfrak{n}}
\newcommand{\fp}{\mathfrak{p}} 
\newcommand{\fz}{\mathfrak{z}}
\newcommand{\CC}{\mathbb{C}}
\newcommand{\FF}{\mathbb{F}}
\newcommand{\QQ}{\mathbb{Q}}
\newcommand{\RR}{\mathbb{R}}
\newcommand{\bC}{\mathbf{C}}
\DeclareMathOperator{\spanrow}{\text{span\:\!row}}
\DeclareMathOperator{\spancol}{\text{span\:\!col}}
\DeclareMathOperator{\Stab}{\mathrm{Stab}}
\DeclareMathOperator{\GL}{\mathrm{GL}}
\DeclareMathOperator{\gl}{\mathfrak{gl}}
\DeclareMathOperator{\tr}{\mathrm{tr}}
\DeclareMathOperator{\Ad}{\mathrm{Ad}}
\DeclareMathOperator{\Ind}{\mathrm{Ind}}
\DeclareMathOperator{\uInd}{\mathrm{u\text{-}Ind}}
\DeclareMathOperator{\Mat}{\mathrm{Mat}}
\newcommand{\isom}{\cong}
\DeclareMathOperator{\rank}{\mathrm{rank}}
\newcommand{\der}{\mathrm{der}}
\newcommand{\form}[2]{\langle{#1},{#2}\rangle}
\def\smatc[#1,#2,#3,#4]{\ensuremath\bigl( \begin{smallmatrix}
#1&#2\\ #3&#4
\end{smallmatrix} \bigr)}
\def\bmatc[#1,#2,#3,#4]{\ensuremath
  \begin{pmatrix}
    #1&#2\\ #3&#4
  \end{pmatrix}}
\newtheorem{lem}[thm]{Lemma}
\newcommand{\fu}{{\mathfrak{u}}}
\newcommand{\BF}{{\mathbb{F}}}
\DeclareMathOperator{\codim}{{\mathrm{codim}}}
\newcommand{\BN}{{\mathbb{N}}}
\def\forma[#1,#2]{\ensuremath\langle #1 , #2 \rangle}
\title{Associated Representations of finite pattern groups}
\author{Chufeng Nien}
\address{Key Laboratory of Computing and Stochastic Mathematics (Ministry of Education), School of Mathematics and Statistics, Hunan Normal University, Changsha, Hunan 410081,P. R. China }
\email{nienpig@hotmail.com}
\author{Chenyan Wu}
\address{School of Mathematics and Statistics,
the University of Melbourne, Victoria 3010, Australia}
\email{chenyan.wu@unimelb.edu.au}
\thanks{} 
\date{\today}
\keywords{Coadjoint orbits, Unitriangular groups, Unipotent groups, Pattern groups, Higman Conjecture, Isaacs Conjecture, Lehrer Conjecture}
\begin{document}
\begin{abstract}
In this paper, we consider the  construction of  irreducible representations of finite pattern groups in terms of
Panov's associative polarization, which is a finite-field analogue of Kirillov's orbital method.
Using this construction, first, we are able to classify the irreducible representations of  the unipotent radical of the standard parabolic subgroups of $\GL_n$ with 4 parts;
second, we can parameterize   irreducible characters of degree $q$ in terms of coadjoint orbits of  cardinality $q^2$, for any finite pattern groups $G$ over $\BF_q,$ where  $\BF_q$ is a finite field with $q$ elements.
 
\end{abstract}
\maketitle{}

\section{Notation and Preliminary}
\label{sec:notation-preliminary}

Let $F$ be a self-dual field, that is, a finite field, or $\RR$, or $\CC$, or  a finite extension of $\QQ_p$,
or a field $\FF_q((t))$ of formal Laurent series in one variable over a finite field. 
We consider the type $A$ Lie group $\GL_{n}$ over $F$.
Let $S$ denote the diagonal torus of $\GL_{n}$, $U_{n}$  the upper triangular unipotent subgroup of $\GL_{n}$ and  $\Delta_{n}$  the set of positive roots:
\begin{equation*}
  \Delta_{n} = \{ \varepsilon_{i} - \varepsilon_{j} \spacedvert 1\le i < j \le n \}.
\end{equation*}

Let $\fg_{\alpha}$ denote the root space of $\gl_{n}$ and $U_{\alpha}$ the root subgroup of $\GL_{n}$ associated to $\alpha$.
Fix  a  group isomorphism
\begin{align*}
  x_{\alpha}: F \rightarrow U_{\alpha} 
\end{align*}
that is uniquely determined by the requirement $t x_{\alpha}(\xi) t^{-1}= x_{\alpha}(\alpha(t) \xi)$ for all $t\in S$ and $\xi\in F$.

We note the commutator relations.
Let $\alpha,\beta\in \Delta_{n}\cup -\Delta_{n}$ and assume that $\alpha + \beta\neq 0$.
Then for $\xi,\eta\in F$,  if $\alpha+\beta\in \Delta_{n}\cup -\Delta_{n}$, then
\begin{align*}
  x_{\alpha}(\xi) x_{\beta}(\eta)   x_{\alpha}(\xi)^{-1} x_{\beta}(\eta)^{-1} =  x_{\alpha+\beta} (\pm \xi\eta);
\end{align*}
otherwise $x_{\alpha}(\xi)$ and $x_{\beta}(\eta)$ commute.
More precisely, if $\alpha=\varepsilon_{i}-\varepsilon_{j}$ and $\beta=\varepsilon_{j}-\varepsilon_{k}$, then we take the $+$ sign and
if $\alpha=\varepsilon_{j}-\varepsilon_{k}$ and $\beta=\varepsilon_{i}-\varepsilon_{j}$, then we take the $-$ sign.

Let  $D \subseteq \Delta_{n}$.
Set  $\fg_{D} = \oplus_{\alpha\in D}  \fg_{\alpha}$. 

We say that a subset $D \subseteq \Delta_{n}$ is closed if for $\alpha,\beta\in D$ such that $\alpha + \beta \in \Delta_{n}$, then $\alpha + \beta \in D$.
Now assume that $D \subseteq \Delta_{n}$ is closed.
We say that $\alpha\in D$ is primitive, if it cannot be written as the sum of two roots in $D$.
Denote the set of primitive elements in $D$ by $D^{0}$ and denote the set of non-primitive elements in $D$ by $D^{\sharp}$.
By the commutator relations, $[\fg_{D},\fg_{D}]=\fg_{D^{\sharp}}$.
We call $\fg_{D}$ a pattern algebra.
Write $\fg_{D}^{*}$ for its $F$-linear dual.
Let $G_{D}$ be the subgroup of $\GL_{n}$ generated by $U_{\alpha}$ for $\alpha\in D$.
It is a subgroup of $U_{n}$.
We call it a pattern group. 
 Given a pattern algebra $\fg$ or a pattern group $G$, we  use $\Delta_{\fg}$ or $\Delta_{G}$ to indicate the closed set associated to $\fg $ or $G$. 

Next we discuss the coadjoint orbits which arise from the coadjoint action $\Ad^{*}$ of $G_{D}$ on $\fg_{D}^{*}$.
We write $\form{\ }{\ }$ for the natural paring between $\fg_{D}^{*}$ and $\fg_{D}$.
For $T\in \fg_{D}^{*}$ and $g\in G_{D}$, the coadjoint action of $G_{D}$ on $\fg_{D}^{*}$ is given by
\begin{align}\label{eq:coadj}
  \form{\Ad^{*}(g) T}{X} = \form{T}{\Ad(g^{-1})X} \quad\text{for all $X\in\fg_{D}$}.
\end{align}
We have a perfect pairing 
\begin{align*}
  \gl_{n} \times \gl_{n} &\rightarrow F\\
   (X,Y) &\mapsto \tr(XY).
\end{align*}
which restricts to a perfect pairing (where  $D$ is not necessarily closed)
\begin{align*}
  \fg_{D} \times \fg_{-D} &\rightarrow F.
\end{align*}
Thus we have an isomorphism $\fg_{D}^{*} \isom \fg_{-D}$.
As a result, sometimes we will write $\form{T}{X}$ for $\tr(TX)$ with $T\in\fg_{-D}$ and $X\in\fg_{D}$.

Now we make the coadjoint action of $G_{D}$ on $\fg_{-D}$ explicit.
Let $T\in\fg_{-D}$.
From~\eqref{eq:coadj}, we get
\begin{align*}
  \tr(\Ad^{*}(g) T X) =& \tr( T \Ad(g^{-1})X) \\
  =& \tr(Tg^{-1}X g) = \tr ( gTg^{-1}X) = \tr([gTg^{-1}]_{\fg_{-D}}X) ,
\end{align*}
where we let $[\cdot]_{\fg_{-D}}$ be the projection from $\gl_{n}$ to $\fg_{-D}$.
In conclusion, the coadjoint action of $G_{D}$ on $\fg_{-D}$ is given by
\begin{align*}
  \Ad^{*}(g) T = [gTg^{-1}]_{\fg_{-D}}.
\end{align*}

Next we write down the linear characters of $G_{D}$.
Fix a nontrivial additive character $\psi: F\rightarrow \CC^{\times}$.
The derived group $G_{D}^{\der}$ of $G_{D}$ is generated by $U_{\alpha}$ for $\alpha$ running over non-primitive roots in $D$
and it is  a pattern group with  Lie algebra given by $\fg_{D^{\sharp}}$.
The linear characters of $G_{D}$ can be identified with the linear characters of 
\begin{equation*}
  G_{D}/G_{D}^{\der} \isom \prod_{\alpha\in D^{0}} U_{\alpha} \isom \prod_{\alpha\in D^{0}} \fg_{\alpha} = \fg_{D^{0}}.
\end{equation*}
This isomorphism gives rise to  the projection map $G_{D} \rightarrow \fg_{D^{0}}$ which we denote by  $[\cdot]_{\fg_{D^{0}}}$.
Then $[g_{1}g_{2}]_{\fg_{D^{0}}} = [g_{1}]_{\fg_{D^{0}}}+[g_{2}]_{\fg_{D^{0}}}$, for $g_{1},g_{2} \in G_{D}$. 

Let $T\in \fg_{-\Delta_{n}}$.
Set $\psi_{T}(g) := \psi(\tr(T (g-I))) =  \psi(\tr(T g))$ for $g\in G_{D}$.
In general, it is not a linear character of $G_{D}$.
It is a linear character of $G_{D}$ exactly when $\form{T}{\fg_{D^{\sharp}}} = 0$ or equivalently $\tr T \fg_{D}^{2}=0$.
All linear characters of $G_{D}$ arise in this way.

 \section{Auxiliary Results}
In this section, we recall some notations and auxiliary results.

\begin{lem}\cite[Lemma 4.1.]{DI08}\label{sameno} Let $\fa$ be a finite dimensional associative nilpotent algebra over $\BF_q$.
Then the number of coadjoint orbits is the same as the number of conjugacy classes,
 which is also equal to the number of isomorphism classes of irreducible representations of $1+\fa$.
\end{lem}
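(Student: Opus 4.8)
The plan is to prove the two stated equalities in turn. That the number of conjugacy classes of a finite group equals the number of isomorphism classes of its irreducible complex representations is the classical fact of finite-group character theory, valid for every finite group; here $G := 1+\fa$ is finite because $\fa$ is finite-dimensional over $\BF_q$, it is closed under multiplication because $\fa$ is an algebra (indeed $(1+x)(1+y) = 1 + (x+y+xy)$), and each element is invertible because $\fa$ is nilpotent (the inverse $(1+x)^{-1} = 1 - x + x^2 - \cdots$ is a finite sum). So the substance of the lemma is the equality between the number of coadjoint orbits on $\fa^{*}$ and the number of conjugacy classes of $G$.

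First I would reduce conjugacy classes to adjoint orbits. The group $G$ acts on the $\BF_q$-vector space $\fa$ by $\Ad(g)X = gXg^{-1}$, and the bijection $\fa \to G$, $X \mapsto 1+X$, intertwines this action with conjugation in $G$: for $g = 1+a$ one computes $g(1+X)g^{-1} = 1 + gXg^{-1}$. Hence $X \mapsto 1+X$ carries the $\Ad$-orbits in $\fa$ bijectively onto the conjugacy classes of $G$, and it remains only to match the adjoint orbits on $\fa$ with the coadjoint orbits on $\fa^{*}$.

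Second I would invoke Burnside's orbit-counting lemma, which expresses the number of orbits of a finite group acting on a finite set as the average number of fixed points. The $g$-fixed points in $\fa$ form the subspace $\ker(\Ad(g) - 1)$, of cardinality $q^{\dim \ker(\Ad(g)-1)}$, while the $g$-fixed points in $\fa^{*}$ form $\ker(\Ad^{*}(g) - 1)$, of cardinality $q^{\dim \ker(\Ad^{*}(g)-1)}$. From the defining relation $\form{\Ad^{*}(g)T}{X} = \form{T}{\Ad(g^{-1})X}$, the operator $\Ad^{*}(g)$ is the inverse transpose of $\Ad(g)$, so that $\Ad^{*}(g) - 1$ equals an invertible operator times the transpose of $\Ad(g) - 1$; since a matrix and its transpose have equal rank, these two operators have equal nullity. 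Thus the fixed-point counts on $\fa$ and on $\fa^{*}$ coincide for every $g \in G$, and Burnside's lemma forces the two orbit numbers to be equal. (This is precisely Brauer's permutation lemma applied to a representation and its dual.)

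The main obstacle is this last orbit-counting step, and specifically keeping the linear algebra honest: one must verify that the coadjoint action is genuinely the inverse transpose of the adjoint action, so that passing from $\Ad(g) - 1$ to $\Ad^{*}(g) - 1$ merely multiplies by an invertible factor and preserves nullity, after which the entire matter collapses to the elementary identity $\rank(M) = \rank(M^{\top})$ over $\BF_q$.
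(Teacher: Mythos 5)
Your proof is correct and complete: the reduction of conjugacy classes to adjoint orbits via $X \mapsto 1+X$, followed by Burnside/Brauer fixed-point counting with the observation that $\Ad^{*}(g)$ is the inverse transpose of $\Ad(g)$, is exactly the standard argument. The paper itself offers no proof of this lemma --- it is quoted directly from Diaconis--Isaacs \cite{DI08} --- and your argument is essentially the one given in that cited source, so there is nothing to fault and no genuinely different route to compare.
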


Let $\Ind^{G} $ denotes the  induction functor.
Let $ \fa$ be a finite dimensional associative nilpotent algebra over $F$.
Then $\fa$ inherits a natural topology from $F$, and $1+\fa$ becomes a locally compact (Hausdorff) and
second countable topological group. Moreover, it is unimodular.
If $\fb$ is an $F$-subalgebra
of $\fa$, then $1+\fb$ can be viewed as a closed subgroup of $1+\fa.$
In this situation,
Mitya Boyarchenko \cite{Bo11} verifies a conjecture proposed by E. Gutkin \cite{Gutkin-MR323915} in 1973 by establishing the following theorem.

\begin{thm}\label{Bo}\cite[Theorem 1.3.]{Bo11}
  Let $\pi: 1+\fa\mapsto U(H)$ be a unitary irreducible representation. 
  Then	there exist an $F$-subalgebra $\fb\subset \fa$ and a unitary character $\alpha: 1 + \fb \mapsto  \CC^{\times}$ such that
	$\pi\cong \uInd_{1+\fb}^{1+\fa}\alpha,$ where $\uInd$ denotes the operation of unitary induction.
\end{thm}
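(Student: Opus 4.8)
The statement is the orbit-method monomiality theorem for the locally compact groups $1+\fa$, generalizing Kirillov's theorem for real unipotent Lie groups to an arbitrary self-dual $F$. The plan is to argue by induction on $\dim_F\fa$, the case $\fa=0$ being trivial. Self-duality of $F$ is used throughout to identify, via $\psi$ and the trace pairing of Section~\ref{sec:notation-preliminary}, the Pontryagin dual of $(\fa,+)$ with functionals on $\fa$, so that characters of additive subquotients are parametrized by linear functionals; the unimodularity and second countability of $1+\fa$ recorded above make unitary induction and the Mackey machine available in the cases where $F$ is infinite.

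First I would isolate a central ideal to descend along. Let $\fz_{0}=Fz$ be a one-dimensional $F$-subspace of the two-sided annihilator $\{x\in\fa : x\fa=\fa x=0\}$, which is nonzero because $\fa$ is nilpotent. Then $\fz_{0}$ is a two-sided ideal with $\fz_{0}^{2}=0$ and $\fz_{0}\fa=\fa\fz_{0}=0$, so $1+\fz_{0}$ is a closed central subgroup of $G:=1+\fa$ isomorphic to $(F,+)$. By Schur's lemma $\pi$ restricts to a unitary character of $1+\fz_{0}$, say $1+sz\mapsto\psi(sc)$ for some $c\in F$. If $c=0$, then $\pi$ is trivial on $1+\fz_{0}$ and factors through $G/(1+\fz_{0})\cong 1+\fa/\fz_{0}$; applying the inductive hypothesis to the nilpotent algebra $\fa/\fz_{0}$ and pulling back along the quotient map, with $\fd\subseteq\fa$ taken to be the preimage $F$-subalgebra (which contains $\fz_{0}$), gives $\pi\cong\uInd_{1+\fd}^{G}\beta$ by induction in stages, and we are done.

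The remaining case $c\neq0$, where $\pi$ has nontrivial central character, is the heart of the matter and is exactly where Panov's associative polarization enters. Choosing $T\in\fa^{*}$ extending the functional $sz\mapsto sc$ on $\fz_{0}$, I would form the alternating bilinear form $B_{T}(x,y)=T(xy-yx)$ on $\fa$, whose radical is the Lie algebra of the coadjoint stabilizer of $T$ and on whose quotient $B_{T}$ becomes nondegenerate. The plan is to produce an $F$-subalgebra $\fb\subseteq\fa$ that is maximal isotropic for $B_{T}$ and satisfies $T(\fb^{2})=0$ — an associative polarization at $T$ — so that $\psi_{T}(1+x):=\psi(T(x))$ is a genuine unitary character of $1+\fb$, and then to prove $\uInd_{1+\fb}^{G}\psi_{T}\cong\pi$.

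The hard part will be this last case, in two respects. First, one must show that a maximal isotropic subspace for $B_{T}$ can be chosen to be an associative $F$-subalgebra; this is precisely the content of Gutkin's conjecture and genuinely uses nilpotence and associativity (over $\RR$ and $\CC$ it reduces to Kirillov's Lagrangian-subalgebra construction, but over $p$-adic and Laurent-series fields the existence of an $F$-rational polarization is the delicate point). Second, one must establish that $\uInd_{1+\fb}^{G}\psi_{T}$ is irreducible, independent of the polarization, and isomorphic to $\pi$: for infinite $F$ this is a Stone--von Neumann argument executed through Mackey's imprimitivity theorem, which requires the coadjoint orbit $\Ad^{*}(G)T$ to be locally closed and to carry an invariant measure, again leaning on the unimodularity and second countability noted above. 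The finite-field orbit--representation count of Lemma~\ref{sameno} serves as the base model and a consistency check for the bijection between irreducibles and coadjoint orbits that this construction realizes.
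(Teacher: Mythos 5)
There is a genuine gap, and in fact the central mechanism you propose cannot work. First, your text is an outline rather than a proof: after the (correct and easy) reduction along a central line $\fz_0$ with trivial central character, you arrive at the case of nontrivial central character and then simply name the two remaining steps --- existence of the inducing subalgebra and the irreducibility/exhaustion argument --- as ``the hard part,'' observing that the first ``is precisely the content of Gutkin's conjecture.'' Since Theorem~\ref{Bo} \emph{is} Gutkin's conjecture, this is circular: the entire content of the theorem is deferred, not proved. (Note also that this paper gives no proof of the statement; it is imported from \cite{Bo11}, so there is no argument here you could be paralleling.)

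Second, and more seriously, the route you choose for the hard case is provably impossible in general. You propose to realize $\pi$ as $\uInd_{1+\fb}^{G}\psi_{T}$ where $\fb$ is an \emph{associative polarization} of $T$ in the sense of Definition~\ref{asspolarization} (maximal isotropic for $B_T$, an $F$-subalgebra, with $T(\fb^{2})=0$). But such $\fb$ need not exist: as recorded in Section 3 of this paper, \cite{Ev11} exhibits finite pattern groups that are not of good type (Definition~\ref{goodtype}), i.e.\ functionals $\lambda_S$ admitting no associative polarization at all. This is exactly why Gutkin's original polarization-based argument was flawed and why Boyarchenko's theorem is subtler than your plan suggests: the conclusion of Theorem~\ref{Bo} only asserts that $\pi$ is induced from \emph{some} unitary character $\alpha$ of \emph{some} subalgebra group $1+\fb$; the pair $(\fb,\alpha)$ need not be of the form (polarization of $T$, $\psi_T$), and identifying these two statements conflates the theorem with the strictly stronger ``good type'' property that fails in general. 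A correct proof must instead proceed by a Clifford--Mackey reduction along suitable ideals of $\fa$ (as in Halasi's finite-field argument \cite{Ha04} and Boyarchenko's treatment over general self-dual fields), in which the inducing subalgebra is produced by descent through smaller algebra groups rather than by polarizing a fixed functional.
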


Assume that the finite group  $G =M\ltimes N$  is a  semidirect product of $M$ by $N$, where $N$ is an abelian normal subgroup of $G$.  Let $m\in M$ act on $\widehat N$ by
$m\cdot \chi=\chi^m,$ where $\chi^m\in \widehat N$ is given by
$$\chi^m(n)=\chi(m^{-1}nm) \text{ for }n\in N.$$
Let $ R_\chi=\{m\in M\mid \chi^m=\chi\}$  be the stabilizer of $\chi$ in $M$.
Then the irreducible representations of $G$  can be described in terms of induced representations of the following form.

\begin{thm}\cite[Sec 8.2, Proposition 25.]{Se77}\label{Clifford} For
	$\chi, \ \chi'\in\widehat N$, and $\pi \in \widehat R_\chi,\ \pi' \in \widehat R_{\chi'},$  the followings hold.
	\begin{enumerate}
		\item
		$\Ind_{R_{\chi}\ltimes N}^G\pi \otimes\chi$ is an  irreducible representation of $G$.
		\item $\Ind_{R_{\chi}\ltimes N}^G\pi \otimes\chi\cong \Ind_{R_{\chi'}\ltimes N}^G\pi' \otimes\chi'$ if and only if $\chi$ and $\chi'$ are in the same $M$-orbit, and $\pi\cong \pi'$.
		\item Every irreducible representation $\pi$  of $G$ is isomorphic to
		$\Ind_{R_{\chi}\ltimes N}^G\pi \otimes\chi,$ for some $\chi   \in\widehat N$, and $\pi  \in \widehat R_\chi.$
	\end{enumerate}
	
\end{thm}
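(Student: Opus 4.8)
The plan is to follow the classical \emph{little-group} argument of Wigner and Mackey, realizing the statement as an instance of Clifford theory that simplifies because $N$ is abelian and $G$ splits as a semidirect product. Throughout write $H = R_\chi \ltimes N$, and note first that since $N$ is abelian it acts trivially on $\widehat N$ (as $\chi^n = \chi$ for $n\in N$), so the $G$-orbits and $M$-orbits on $\widehat N$ coincide and the full stabilizer of $\chi$ in $G$ is exactly $H$. The key structural observation is that $\chi$ extends to a genuine character $\tilde\chi$ of $H$ by $\tilde\chi(rn) = \chi(n)$ for $r\in R_\chi$, $n\in N$; this is a homomorphism precisely because $R_\chi$ fixes $\chi$, and it is here that the semidirect-product hypothesis is used. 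Inflating $\pi\in\widehat{R_\chi}$ along $H\to H/N\cong R_\chi$ and tensoring with $\tilde\chi$ gives the representation I will abbreviate $\sigma = \pi\otimes\chi$ of $H$; since $\tilde\chi$ is one-dimensional, $\sigma$ is irreducible iff $\pi$ is.

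For part (1) I would invoke Mackey's irreducibility criterion: $\Ind_H^G\sigma$ is irreducible iff $\sigma$ is irreducible and, for every representative $s$ of a nontrivial double coset in $H\backslash G/H$, the representations $\sigma|_{H\cap sHs^{-1}}$ and $\sigma^s|_{H\cap sHs^{-1}}$ are disjoint. Because $N$ is normal it lies in $H\cap sHs^{-1}$, and on $N$ the first representation is a multiple of $\chi$ while the second is a multiple of $\chi^s$. A double coset is nontrivial exactly when the image of $s$ in $M=G/N$ lies outside $R_\chi$, i.e.\ exactly when $\chi^s\neq\chi$; hence the two restrictions already disagree on $N$ and are disjoint, so the criterion is met.

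For part (2), the forward implication is a direct computation: if $\chi'=\chi^m$ then $R_{\chi'} = mR_\chi m^{-1}$, and conjugation by a lift of $m$ carries $\sigma$ to the analogous data for $\chi'$, inducing an isomorphism of the two induced representations. For the converse I would restrict to $N$: by Mackey's restriction formula together with normality of $N$, $\Res_N\Ind_H^G\sigma$ is $(\dim\pi)$ copies of $\bigoplus_{mR_\chi\in M/R_\chi}\chi^m$, so the set of characters of $N$ occurring is precisely the $M$-orbit of $\chi$. Equality of the induced representations therefore forces $\chi$ and $\chi'$ into the same orbit; after using the forward direction to reduce to $\chi'=\chi$, one recovers $\pi$ as the $H$-action on the $\chi$-isotypic subspace for $N$ (an $H$-stable subspace isomorphic to $\sigma$), whence $\pi\cong\pi'$.

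Part (3) is the Clifford-theoretic heart. Given an irreducible $\rho$ of $G$, decompose $\Res_N\rho$ into $N$-isotypic components $V_\chi$; since $g\cdot V_\chi = V_{\chi^g}$, the group $G$ permutes these components and irreducibility forces the occurring characters to form a single orbit, say that of $\chi$. The stabilizer of $V_\chi$ is $H$, so $V_\chi$ is an $H$-subrepresentation with $\rho\cong\Ind_H^G V_\chi$, and $V_\chi$ is $H$-irreducible. Twisting by $\tilde\chi^{-1}$ kills the $N$-action, so $V_\chi\otimes\tilde\chi^{-1}$ descends to an irreducible $\pi\in\widehat{R_\chi}$ with $V_\chi\cong\pi\otimes\chi$, giving $\rho\cong\Ind_H^G(\pi\otimes\chi)$. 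The main obstacle to watch is the extension of $\chi$ to $H$: in general Clifford theory this meets a cohomological obstruction forcing projective representations, and the point of the semidirect-product hypothesis is exactly that the splitting $R_\chi\hookrightarrow H$ trivializes it, so that every $V_\chi$ genuinely factors as $\pi\otimes\chi$.
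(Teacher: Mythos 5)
The paper does not prove this statement at all --- it is quoted verbatim from Serre \cite{Se77} as an auxiliary result --- so the only meaningful comparison is with Serre's proof, which is the Wigner--Mackey little-group argument. Your proposal is correct and is essentially that same argument (the extension $\tilde\chi$ of $\chi$ to $R_\chi\ltimes N$ using the splitting, identification of the stabilizer, and the Clifford/isotypic decomposition of $\Res_N$ for parts (2) and (3)); the only cosmetic difference is that you route parts (1) and (2) through Mackey's irreducibility criterion and restriction formula, where Serre argues directly with the $N$-isotypic decomposition of the induced space, and the two routes are interchangeable.
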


Now let $F=\FF_{q}$.
Let $U_n=1+\fu_n$ be the unipotent group, corresponding to  the closed set $\Delta_n.$ Let $\fg$ be a subalgebra of $\fu_n,$ and $G=1+\fg$.
Denote by $\mathfrak O_G$   the set of coadjoint orbits and by  $\mathfrak O_\lambda$  the coadjoint orbit containing $\lambda\in \fg^{*}$.
Let $\psi$ be any fixed nontrivial additive characters of $\BF_q$.
Note that if $\fg^\lambda$ is the stabilizer of $\lambda \in \fg^{*}$, then the subgroup $G^\lambda=1+\fg^\lambda$ is the stabilizer of $\lambda\in \fg^{*}$  and
\begin{equation}\label{dim eq}
\frac{\mid G\mid}{\mid G^\lambda\mid}=\frac{\mid \fg\mid}{\mid \fg^\lambda\mid}=q^{\dim \fg-\dim \fg^\lambda}=q^{\dim  \fO_\lambda} .
\end{equation}

 \begin{defn}
 Let $\fp$ be a subalgebra of $\fg.$ For  $\lambda\in \fg^*$, define a skew symmetric bilinear form   $$B_\lambda(x, y):=\lambda([x,y]),  \text{ for }x,\ y \in \fg,\text{ where }[x,y]=xy-yx.$$  The space $\fp$ is called {\bf isotropic} if $B_\lambda(x, y)=0,$ for all $x, \ y \in\fp$.
 The space $\fp$ is a {\bf polarization} of $\lambda\in \fg^*,$ if $\fp$ is a maximal isotropic subspace for the skew symmetric bilinear form $B_\lambda(x, y)$ on $\fg$. 
\end{defn}

 \begin{prop}\cite[Proposition 1.]{Pa14}
Any linear form $\lambda$ on a nilpotent Lie algebra $\fg$ has a polarization $\fp_{\lambda}.$
\end{prop}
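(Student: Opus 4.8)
The plan is to construct a polarization $\fp_{\lambda}$ by hand, the only nontrivial input being the nilpotency of $\fg$. First I would reformulate the target in linear-algebra terms. The form $B_{\lambda}$ is alternating, and its radical is exactly the stabilizer $\fg^{\lambda}=\{x\in\fg : \lambda([x,\fg])=0\}$ from \eqref{dim eq}. Any isotropic subspace $\fp$ then satisfies $\dim\fp\le\tfrac12(\dim\fg+\dim\fg^{\lambda})$, with equality precisely when $\fp$ is maximal isotropic, and such a $\fp$ automatically contains $\fg^{\lambda}$. So I am looking for a subspace with $\lambda([\fp_{\lambda},\fp_{\lambda}])=0$ and $\dim\fp_{\lambda}=\tfrac12(\dim\fg+\dim\fg^{\lambda})$; for the construction to feed into Panov's method one moreover wants $\fp_{\lambda}$ to be a subalgebra (so that $1+\fp_{\lambda}$ is a group), and it is this last requirement that makes the statement depend on nilpotency, since a maximal isotropic subspace by itself exists on any finite-dimensional space for trivial dimension reasons.

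My main approach is induction on $\dim\fg$. The base case ($B_{\lambda}=0$, in particular $\dim\fg\le 1$) is handled by $\fp_{\lambda}=\fg$. For the inductive step, since $\fg$ is nilpotent one has $[\fg,\fg]\subsetneq\fg$, so I may choose a codimension-one subspace $\fh\supseteq[\fg,\fg]$; containing the derived algebra forces $\fh$ to be an ideal, and it is again nilpotent. Applying the inductive hypothesis to $\lambda':=\lambda|_{\fh}$ produces a polarization $\fp'\subseteq\fh$: a subalgebra of $\fh$, maximal isotropic for $B_{\lambda'}=B_{\lambda}|_{\fh\times\fh}$, and hence self-orthogonal inside $\fh$, i.e.\ $\fp'=(\fp')^{\perp_{\fh}}$.

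I would then split into two regimes according to whether $\fg^{\lambda}\subseteq\fh$. If $\fg^{\lambda}\subseteq\fh$, a dimension count (the radical of $B_{\lambda'}$ gains exactly one dimension over $\fg^{\lambda}$, while $\dim\fh=\dim\fg-1$) gives $\dim\fp'=\tfrac12(\dim\fg+\dim\fg^{\lambda})$, and $\fp'$ is already isotropic for $B_{\lambda}$ on all of $\fg$ and contains $\fg^{\lambda}$; thus $\fp_{\lambda}:=\fp'$ is a polarization, inheriting the subalgebra property from $\fh$. If instead $\fg^{\lambda}\not\subseteq\fh$, I pick $Y\in\fg^{\lambda}\setminus\fh$, so that $\fg=\fh\oplus FY$ and $B_{\lambda}(Y,\cdot)\equiv 0$, and take $\fp_{\lambda}:=\fp'\oplus FY$, which has the correct dimension and is isotropic for free.

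The main obstacle is the subalgebra property in this second regime, namely showing $[\fp',Y]\subseteq\fp'$. A Jacobi expansion shows that the obstruction $(x,w)\mapsto\lambda([[x,Y],w])$ on $\fp'$ is symmetric but does not obviously vanish, so closure is not formal and must be forced by nilpotency. The cleanest way to package this uniformly, and the route I would ultimately write up, is Vergne's flag polarization: using nilpotency, fix an adapted flag of ideals $0=\fg_{0}\subset\fg_{1}\subset\cdots\subset\fg_{n}=\fg$ with $\dim\fg_{i}=i$ and $[\fg,\fg_{i}]\subseteq\fg_{i-1}$ (obtained by refining a central series), and set $\fp_{\lambda}=\sum_{i}\fm_{i}$ with $\fm_{i}=\{x\in\fg_{i} : \lambda([x,\fg_{i}])=0\}$. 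Here isotropy and the half-dimension count follow from the nested-radical structure along the flag, while the subalgebra property follows from the adaptedness $[\fg,\fg_{i}]\subseteq\fg_{i-1}$ together with the Jacobi identity; this is exactly the step that breaks for non-nilpotent $\fg$ and is the heart of the argument.
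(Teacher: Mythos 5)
The paper contains no proof of this proposition: it is quoted directly from Panov \cite{Pa14}, so there is no internal argument to compare yours against. Judged on its own terms, your proposal is correct, and it ends up at the standard proof (Vergne's flag construction) that the literature behind the citation rests on. Your decision to abandon the naive dimension induction is also the right one: in your second regime the closure condition $[Y,\fp']\subseteq\fp'$ can genuinely fail, because an inductively produced polarization of $\lambda|_{\fh}$ need not be $\ad(Y)$-stable, and no choice of $Y\in\fg^{\lambda}\setminus\fh$ repairs this; recognizing that obstruction and pivoting to the flag is exactly what the classical treatment does.

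Two points in the flag construction deserve fuller treatment than your one-line attributions. First, the subalgebra property of $\fp_{\lambda}=\sum_i\fm_i$ is a short Jacobi computation that uses only that each $\fg_i$ is an ideal, not the stronger adaptedness $[\fg,\fg_i]\subseteq\fg_{i-1}$: for $x\in\fm_i$ and $y\in\fm_j$ with $i\le j$, one has $[x,y]\in\fg_i$, and for every $z\in\fg_i$ the identity $\lambda([[x,y],z])=\lambda([x,[y,z]])-\lambda([y,[x,z]])=0$ holds because $[y,z]\in\fg_i$ and $[x,z]\in\fg_i\subseteq\fg_j$; hence $[x,y]\in\fm_i\subseteq\fp_{\lambda}$. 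Nilpotency thus enters solely through the existence of the flag of ideals (your refinement of the lower central series). Second, the half-dimension count is not automatic from ``nested radicals'': one proves by induction on $j$ that $\sum_{i\le j}\fm_i$ is maximal isotropic in $\fg_j$ for the restricted form, and the induction step is precisely the codimension-one radical dichotomy you already worked out when splitting into your two regimes --- the radical of the restriction to a hyperplane either contains the ambient radical with one extra dimension, or is a hyperplane inside it, and in either case a maximal isotropic subspace of the hyperplane extends (by adding the ambient radical) to a maximal isotropic subspace one level up. Isotropy of the sum is, as you say, immediate, since $\fm_j$ is $B_{\lambda}$-orthogonal to all of $\fg_j\supseteq\fm_i$ for $i\le j$. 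With these two computations written out your proof is complete, and it is characteristic-free, which is what the paper's setting over $\BF_q$ requires.
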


 Note that any polarization contains the stabilizer $\fg^\lambda.$ 

  \begin{prop}\cite[Proposition 2.]{Pa14}\label{panov}
Let $\fp$ be a polarization of  $\lambda\in \fg^*,$ $P=1+\fp$, $j$ be the natural projection of $\fg^*$ onto $\fp^*$, and $L(\lambda):=j^{-1}j(\lambda).$ Then
\begin{enumerate}[(i)]
\item $\dim \fp=\frac{1}{2}(\dim \fg+\dim \fg^{\lambda}).$
\item $\mid L(\lambda) \mid =\sqrt{\mid \fO_\lambda\mid}.$
\item $L(\lambda)=\Ad^*_P \lambda.$ In particular, $ L(\lambda)\subset  \fO_\lambda.$
\end{enumerate}
\end{prop}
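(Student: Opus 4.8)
The plan is to prove the three parts in order, using symplectic linear algebra for (i), a coset count for (ii), and for (iii) a combination of equivariance and counting whose conceptual heart is showing that the coadjoint $P$-orbit of $\lambda$ never leaves the fibre $L(\lambda)$. For (i), I would observe that $B_\lambda$ is an alternating bilinear form on $\fg$ whose radical is exactly $\fg^\lambda$, so it descends to a nondegenerate alternating form $\bar B$ on $V=\fg/\fg^\lambda$; in particular $\dim V$ is even. Since the radical pairs trivially with everything, a subspace containing $\fg^\lambda$ is (maximal) isotropic for $B_\lambda$ iff its image in $V$ is (maximal) isotropic for $\bar B$, and maximality forces $\fp\supseteq\fg^\lambda$, as noted just before the statement. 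Standard symplectic linear algebra then gives $\dim(\fp/\fg^\lambda)=\tfrac12\dim V$, whence $\dim\fp=\dim\fg^\lambda+\tfrac12(\dim\fg-\dim\fg^\lambda)=\tfrac12(\dim\fg+\dim\fg^\lambda)$.

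For (ii), note that $L(\lambda)=j^{-1}j(\lambda)$ is a coset of $\ker j$, and $\ker j$ is the annihilator of $\fp$ in $\fg^*$, of dimension $\dim\fg-\dim\fp$. Hence $\mid L(\lambda)\mid=q^{\dim\fg-\dim\fp}$. Substituting (i) and then using $\dim\fO_\lambda=\dim\fg-\dim\fg^\lambda$ from \eqref{dim eq} yields $\mid L(\lambda)\mid=q^{\frac12(\dim\fg-\dim\fg^\lambda)}=q^{\frac12\dim\fO_\lambda}=\sqrt{\mid\fO_\lambda\mid}$.

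For (iii) I would first prove the inclusion $\Ad^*_P\lambda\subseteq L(\lambda)$. Because $\fp$ is a subalgebra it is $\Ad(P)$-stable, so the restriction map $j\colon\fg^*\to\fp^*$ is $P$-equivariant, intertwining $\Ad^*_G$ on $\fg^*$ with the coadjoint action of $P$ on $\fp^*$; thus it suffices to show that $j(\lambda)$ is fixed by $P$. The key point is that the alternating form attached to $j(\lambda)$ is $B_{j(\lambda)}(y,y')=\lambda([y,y'])=B_\lambda(y,y')$ for $y,y'\in\fp$, which vanishes identically precisely because $\fp$ is isotropic. Applying \eqref{dim eq} to the pattern group $P=1+\fp$ acting on $\fp^*$, the $P$-orbit of $j(\lambda)$ has cardinality $q^{\dim\fp-\dim\fp^{j(\lambda)}}$; since $B_{j(\lambda)}\equiv 0$, its radical $\fp^{j(\lambda)}$ is all of $\fp$, so the orbit collapses to the single point $\{j(\lambda)\}$ and $j(\lambda)$ is $P$-fixed. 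Consequently $j(\Ad^*(g)\lambda)=\Ad^*(g)j(\lambda)=j(\lambda)$ for every $g\in P$, i.e. $\Ad^*_P\lambda\subseteq L(\lambda)$, and in particular $\Ad^*_P\lambda\subseteq\Ad^*_G\lambda=\fO_\lambda$. For the reverse inclusion I would count: the stabilizer of $\lambda$ in $P$ is $P\cap G^\lambda=(1+\fp)\cap(1+\fg^\lambda)=1+\fg^\lambda$, using $\fg^\lambda\subseteq\fp$, so $\mid\Ad^*_P\lambda\mid=q^{\dim\fp-\dim\fg^\lambda}$, which by (i) and \eqref{dim eq} equals $q^{\frac12\dim\fO_\lambda}=\sqrt{\mid\fO_\lambda\mid}=\mid L(\lambda)\mid$ by (ii). Two finite sets, one contained in the other and of equal cardinality, coincide, giving $L(\lambda)=\Ad^*_P\lambda\subseteq\fO_\lambda$.

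The steps for (i) and (ii) are routine. I expect the one genuinely delicate point to be the inclusion $\Ad^*_P\lambda\subseteq L(\lambda)$. A direct expansion of $\langle\lambda,g^{-1}xg-x\rangle$ for $g=1+p\in P$ reduces to $\langle\lambda,g^{-1}[x,p]\rangle$, and one is tempted to deduce vanishing from isotropy alone; but the associative finite-field setting produces additional products lying in the associative square $\fp^2$, on which $\lambda$ need not vanish, so a naive term-by-term argument stalls. The resolution, and the crux of the proof, is to abandon that matrix computation in favour of the orbit-size argument above: isotropy forces the alternating form $B_{j(\lambda)}$ on $\fp$ to be identically zero, and then \eqref{dim eq} applied to $P$ forces the relevant coadjoint $P$-orbit on $\fp^*$ to be a single point. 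This shift from a direct computation to a counting statement about $P$ is what I anticipate to be the main obstacle.
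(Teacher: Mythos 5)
Your proof is correct, but there is nothing in the paper to compare it against: Proposition~\ref{panov} is recalled from \cite{Pa14} as a black box, with no proof given. So the comparison must be with Panov's argument and with the surrounding text. Parts (i) and (ii) are the standard arguments (the radical of $B_\lambda$ is $\fg^\lambda$, every maximal isotropic subspace contains it, pass to the symplectic quotient $\fg/\fg^\lambda$; then count the coset $\lambda+\ker j$ and compare with \eqref{dim eq}), and they are fine. For part (iii) your route genuinely differs from the one natural in Panov's setting: there $\fp$ is a Lie subalgebra, $P=\exp(\fp)$, and the inclusion $\Ad^*_P\lambda\subseteq L(\lambda)$ is a term-by-term computation, since $\form{\Ad^*(\exp p)\lambda}{x}=\lambda(\exp(-\ad p)x)$ and every term $(\ad p)^k x$ with $k\ge 1$ lies in $[\fp,\fp]$, which $\lambda$ kills by isotropy. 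Your orbit-collapse argument (apply \eqref{dim eq} to $P$ acting on $\fp^*$, note $B_{j(\lambda)}\equiv 0$), together with the stabilizer count $\Stab_P(\lambda)=P\cap G^\lambda=1+\fg^\lambda$ for the reverse inclusion, is a correct substitute adapted to the algebra-group formulation $P=1+\fp$ that the paper actually states.

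Two caveats. First, you tacitly assume that $\fp$ is closed under the associative product: this is needed for $1+\fp$ to be a group, for $\Ad(P)\fp\subseteq\fp$ (hence the $P$-equivariance of $j$), and for \eqref{dim eq} to be applicable to $P$. The paper's later Definition~\ref{asspolarization} suggests that ``subalgebra'' in the definition of a polarization is meant Lie-theoretically, in which case the statement should really be read with $P=\exp(\fp)$; your equivariance and counting arguments then go through verbatim with $\exp$ in place of $1+{}$, so this is a defect of the quoted statement rather than of your proof, but it deserves an explicit sentence. Second, your closing paragraph overstates the obstruction: in the associative setting the direct computation does not stall if one reindexes instead of expanding. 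Since $x\mapsto gx$ is a linear bijection of $\fp$ (granting multiplicative closure), $j(\lambda)$ being fixed by $g=1+p$ is equivalent to $\lambda(g^{-1}(gx)g)=\lambda(gx)$ for all $x\in\fp$, i.e.\ to $\lambda(xg)=\lambda(gx)$, i.e.\ to $\lambda([x,p])=0$, which is precisely isotropy. This one-line reindexing is the same mechanism that proves the stabilizer statement underlying \eqref{dim eq}, so your appeal to \eqref{dim eq} is sound but heavier than necessary.
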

 When the exponential
map exists and  is a bijection from the Lie algebra $\fg$ onto the subgroup $G= \exp(\fg)$ of the
unitriangular group $U_n$, the analogue of classical  orbital method provides the classification of $\widehat G$ as follows.

Recall that $\psi: \BF_q\mapsto \bC^\times$ is a nontrivial additive character. 
Restriction of $\lambda$ on its polarization $\fp$ defines a character (one dimensional
 representation) $\eta_{\lambda}$  of the group $P = \exp(\fp)$ by the formula
 $$\eta_{\lambda}(\exp(x)) = \psi(\lambda(x)).$$

 \begin{thm}\cite[Theorem 2.]{Pa14}\label{orbit} Let
 $T^\lambda :=\Ind_P^G \eta_{\lambda}.$  Then 
\begin{enumerate}
\item $\dim T^\lambda=q^{\frac{1}{2}\dim \fO_\lambda} = \sqrt{|\fO_\lambda |}.$
\item The representation $T^\lambda$ does not depend on the choice of polarization.
\item The representation $T^\lambda$ is irreducible.
\item The representations $T^\lambda$ and $T^{\lambda'}$ are equivalent if and only if $\lambda$ and $\lambda'$ belong to the same coadjoint orbit.
\item For any $\pi\in \widehat G,$ there exists $\lambda \in \fg^*$ such that $\pi$ is equivalent to $T^\lambda.$ 

\end{enumerate}

\end{thm}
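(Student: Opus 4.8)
The plan is to obtain (1) by a direct dimension count and then to extract (2)--(5) \emph{simultaneously} from a single counting argument that plays Panov's dimension formula off against the equality $\#\{\text{orbits}\}=\#\widehat G$ of Lemma~\ref{sameno}; the only genuinely analytic input will be a Mackey-type orthogonality statement, which I expect to be the crux. Throughout I use that the hypothesis ``$\exp$ is a bijection'' forces the nilpotency class to be small enough that the Baker--Campbell--Hausdorff series is valid over $\BF_q$, and that the polarization $\fp$ is a subalgebra (as it must be for $\eta_\lambda$ to be multiplicative). Indeed, since $\lambda([\fp,\fp])=0$ by isotropy and all higher Baker--Campbell--Hausdorff terms lie in $[\fp,\fp]$, the formula $\eta_\lambda(\exp x)=\psi(\lambda(x))$ does define a linear character of $P=\exp\fp$. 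Consequently $\dim T^\lambda=[G:P]=q^{\dim\fg-\dim\fp}$, and Proposition~\ref{panov}(i) together with~\eqref{dim eq} gives $\dim\fg-\dim\fp=\tfrac12(\dim\fg-\dim\fg^\lambda)=\tfrac12\dim\fO_\lambda$, whence $\dim T^\lambda=q^{\frac12\dim\fO_\lambda}=\sqrt{|\fO_\lambda|}$, proving (1).

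Next I would dispose of the easy direction of (2) and (4): if $\lambda'=\Ad^*(g)\lambda$, then $\Ad(g)\fp$ is a polarization of $\lambda'$ and conjugation by $g$ carries $(P,\eta_\lambda)$ to $(\exp\Ad(g)\fp,\eta_{\lambda'})$, so the induced representations are isomorphic; thus, for a fixed rule assigning polarizations to orbit representatives, $T^\lambda$ depends up to isomorphism only on the coadjoint orbit. The heart of the matter is the following \textbf{orthogonality Claim}: if $\lambda,\lambda'$ lie in distinct coadjoint orbits, then $\langle T^\lambda,T^{\lambda'}\rangle=0$. I would prove this with Mackey's intertwining-number formula, writing
\begin{equation*}
\langle\Ind_P^G\eta_\lambda,\Ind_{P'}^G\eta_{\lambda'}\rangle=\sum_{g\in P\backslash G/P'}\langle\eta_\lambda,\eta_{\lambda'}^g\rangle_{P\cap gP'g^{-1}},\qquad \eta_{\lambda'}^g(x)=\eta_{\lambda'}(g^{-1}xg),
\end{equation*}
where each local term is $0$ or $1$. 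A nonzero term means that $\lambda$ and $\Ad^*(g)\lambda'$ agree on $\fp\cap\Ad(g)\fp'$, and the maximality of the isotropic subspaces $\fp,\fp'$ (that is, $\fp=\fp^{\perp_{B_\lambda}}$) is exactly what upgrades this partial coincidence to $\lambda$ and $\lambda'$ lying in a common orbit. This is the finite-field transcription of the classical computation behind the irreducibility of Kirillov's induced representations, and it is where all the work lies.

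Granting the Claim, parts (3) and (5), the inequivalence half of (4), and ultimately (2) fall out by counting. Let $r$ be the number of coadjoint orbits, so $r=|\widehat G|$ by Lemma~\ref{sameno}. Choosing orbit representatives $\lambda_1,\dots,\lambda_r$ and polarizations yields representations $T_i:=T^{\lambda_i}$ with $\dim T_i=\sqrt{|\fO_{\lambda_i}|}$ and, by the Claim, pairwise disjoint sets of irreducible constituents. Writing $T_i=\sum_\pi m_{i\pi}\pi$ and using that the orbits partition $\fg^*$, one has
\begin{equation*}
\sum_{i=1}^r(\dim T_i)^2=\sum_{i=1}^r|\fO_{\lambda_i}|=|\fg^*|=|G|=\sum_{\pi\in\widehat G}(\dim\pi)^2 .
\end{equation*}
Because $(\dim T_i)^2=\bigl(\sum_\pi m_{i\pi}\dim\pi\bigr)^2\ge\sum_{\pi:\,m_{i\pi}>0}(\dim\pi)^2$ and the supports are disjoint across $i$, summing forces every inequality to be an equality: each $T_i$ is supported on a single irreducible $\pi_i$ with $m_{i\pi_i}=1$, the $\pi_i$ are distinct, and they exhaust $\widehat G$. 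This gives irreducibility (3), inequivalence of $T^\lambda$ and $T^{\lambda'}$ for $\lambda,\lambda'$ in distinct orbits, and the exhaustion statement (5).

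Finally, polarization-independence (2): for a fixed $\lambda$ and two polarizations $\fp,\fp'$, both resulting representations are irreducible by (3), and their intertwining number is positive because the $g=e$ double-coset term equals $\langle\eta_\lambda,\eta_\lambda\rangle_{\exp(\fp\cap\fp')}=1$, since both restrict to $x\mapsto\psi(\lambda(\log x))$ on $\exp(\fp\cap\fp')$; being irreducible and non-orthogonal, they must be isomorphic. The ``same orbit $\Rightarrow$ isomorphic'' half of (4) then follows by combining this with the conjugation isomorphism of the second paragraph. The main obstacle is thus isolated in the orthogonality Claim, precisely the step in which maximal isotropy converts agreement of $\lambda$ and $\Ad^*(g)\lambda'$ on $\fp\cap\Ad(g)\fp'$ into the equality of their orbits; all the rest is bookkeeping with dimensions and Lemma~\ref{sameno}.
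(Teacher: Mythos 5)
Your part (1), the transport-by-conjugation step, and the closing counting scheme are all correct, and the counting is genuinely efficient: granting your orthogonality Claim, the identity $\sum_i(\dim T_i)^2=\sum_i|\fO_{\lambda_i}|=|\fg|=|G|=\sum_{\pi\in\widehat G}(\dim\pi)^2$ does force each $T_i$ to be irreducible with multiplicity one, pairwise inequivalent, and exhaustive (indeed it re-proves Lemma~\ref{sameno} rather than using it). The genuine gap is exactly where you yourself locate the crux: the orthogonality Claim is never proved, and the one sentence offered for it --- that maximality of the isotropic subspaces ``is exactly what upgrades'' agreement of $\lambda$ and $\Ad^*(g)\lambda'$ on $\fp\cap\Ad(g)\fp'$ to equality of coadjoint orbits --- is a restatement of the hard direction, not an argument. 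Nothing formal follows from $\fp=\fp^{\perp_{B_\lambda}}$ here: the subspace $\fp\cap\Ad(g)\fp'$ can be much smaller than a polarization, and already the self-intertwining case ($\lambda=\lambda'$, $\fp=\fp'$, desired conclusion $g\in P$) \emph{is} the irreducibility statement (3). So as written the proposal reduces the theorem to an unproved claim of essentially the same difficulty; in the literature that claim is handled by induction on $\dim\fg$ (Kirillov's reduction to a central ideal inside $\ker\lambda$, or to a Heisenberg quotient via the Mackey machine), and you would need to carry that induction out.

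Note also that the paper itself gives no proof of this statement --- it quotes \cite[Theorem 2]{Pa14} --- so the relevant comparison is with Panov's argument, which avoids your Claim entirely and is what the quoted Proposition~\ref{panov} is set up for. From Proposition~\ref{panov}(ii),(iii) ($L(\lambda)=\Ad^*_P\lambda$ and $|L(\lambda)|=\sqrt{|\fO_\lambda|}$), together with the elementary identity $\sum_{\mu\in L(\lambda)}\psi(\mu(y))=|L(\lambda)|\,\psi(\lambda(y))$ for $y\in\fp$ and $=0$ for $y\notin\fp$, the standard induced-character sum collapses to the Kirillov character formula
\begin{equation*}
\chi_{T^\lambda}(\exp x)=\frac{1}{\sqrt{|\fO_\lambda|}}\sum_{\mu\in\fO_\lambda}\psi(\mu(x)),\qquad x\in\fg .
\end{equation*}
Orthogonality of additive characters on $(\fg,+)$ then gives $\langle\chi_{T^\lambda},\chi_{T^{\lambda'}}\rangle=|\fO_\lambda\cap\fO_{\lambda'}|/\sqrt{|\fO_\lambda|\,|\fO_{\lambda'}|}\in\{0,1\}$, which yields (2), (3) and (4) simultaneously, and (5) follows from your counting argument or from Lemma~\ref{sameno}. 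In short: either supply the induction-on-dimension proof of your orthogonality Claim, or replace the Mackey step by this character-formula computation; the latter is the shorter complete route, and your dimension count (1) and counting paragraph can be kept verbatim around it.
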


Let $p$ be   the characteristic of $\BF_q.$ 
When $p>n,$  the exponential
map is a bijection from the Lie algebra $\fu_n(q)$ onto  $U_n(q)$,
and the classification of $\widehat U_n(q)$ is established by Theorem \ref{orbit}.
 
On the other hands, when the exponential
map is not applicable, the orbital method may be modified as follows.

\begin{defn}\label{asspolarization}
An associative polarization of $\lambda\in  \fg^{*}$ is a polarization $\fp$ satisfying the following conditions:
\begin{enumerate}
\item $\fp$ is an associative subalgebra of $\fg$.
\item $\lambda(\fp^2)=0$.
 \end{enumerate}
\end{defn}

 When $\lambda\in  \fg^{*}$ admits an associative polarization $\fp$, $\psi_\lambda (1+x):=\psi( \lambda(x))$ is a representation of $P=1+\fp.$ Similarly,  let
 $$\eta_{\lambda}(1+x) = \psi\circ\lambda(x).$$
 Consider the induced representation
 $M^\lambda :=\Ind_P^G \eta_{\lambda}.$

 \begin{thm}\cite[proposition 2.3.]{Pa15}\label{2.3}
\begin{enumerate}
\item $\dim M^\lambda=q^{\codim  \fp} = \sqrt{|\fO_\lambda |}, $ where $\codim \fp$ denotes the co-dimension of $\fp $ in $\fg$.
\item The representation $M^\lambda$ does not depend on the choice of the associative polarization.
\item The representation $M^\lambda$ is irreducible.
\item Assume that $\lambda$ and $\lambda'$ both admit associative polarizations.
Then the representations $M^\lambda$ and $M^{\lambda'}$ are equivalent if and only if $\lambda$ and $\lambda'$ belong to the same coadjoint orbit.
\end{enumerate}
\end{thm}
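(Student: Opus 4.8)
The plan is to derive all four assertions from Mackey theory, after first checking that $\eta_\lambda$ really is a linear character of $P$. Since $\fp$ is an associative subalgebra, $(1+x)(1+y)=1+(x+y+xy)\in P=1+\fp$ and $(1+x)^{-1}=1-x+x^2-\cdots\in P$, so $P$ is a subgroup of $G$; moreover $\eta_\lambda\big((1+x)(1+y)\big)=\psi\big(\lambda(x+y+xy)\big)$, and the condition $\lambda(\fp^2)=0$ of Definition~\ref{asspolarization} kills the term $\lambda(xy)$, making $\eta_\lambda$ multiplicative. Both defining conditions of an associative polarization are thus used: associativity to make $P$ a group, and $\lambda(\fp^2)=0$ to make $\eta_\lambda$ a homomorphism. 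Assertion (1) is then immediate, since $\eta_\lambda$ is one-dimensional: $\dim M^\lambda=[G:P]=q^{\dim\fg-\dim\fp}=q^{\codim\fp}$, and Proposition~\ref{panov}(i) together with \eqref{dim eq} give $\codim\fp=\tfrac12(\dim\fg-\dim\fg^\lambda)=\tfrac12\dim\fO_\lambda$, whence $\dim M^\lambda=q^{\frac12\dim\fO_\lambda}=\sqrt{|\fO_\lambda|}$.

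The heart of the matter is a single computation of intertwining numbers, carried out directly because one cannot simply invoke the count of Lemma~\ref{sameno}: not every $\lambda$ need admit an associative polarization, so each $\lambda$ must be handled on its own. For associative polarizations $\fp,\fp'$ of $\lambda,\lambda'$, Mackey's formula gives
\[
\langle M^\lambda, M^{\lambda'}\rangle_G=\sum_{g\in P\backslash G/P'}\big\langle {}^{g}\eta_\lambda,\ \eta_{\lambda'}\big\rangle_{{}^{g}P\,\cap\,P'},
\]
with every summand equal to $0$ or $1$ since the characters are one-dimensional. Because conjugation inside $G\subset\GL_n$ acts on the ``$x$-part'' by $\Ad$, one has ${}^{g}P=1+\Ad(g)\fp$ and ${}^{g}\eta_\lambda(1+y)=\psi\big((\Ad^*(g)\lambda)(y)\big)$, so—using that $\psi$ is nontrivial—the $g$-summand equals $1$ exactly when $\Ad^*(g)\lambda-\lambda'$ vanishes on $\Ad(g)\fp\cap\fp'$:
\[
\big(\Ad^*(g)\lambda-\lambda'\big)\big|_{\Ad(g)\fp\,\cap\,\fp'}=0.
\]

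The crux is to show that the number of double cosets satisfying this vanishing condition is $1$ when $\lambda,\lambda'$ lie in a common coadjoint orbit and $0$ otherwise. I expect this to be the main obstacle, and it is precisely here that a polarization must be a \emph{maximal} isotropic subspace for $B_\lambda$, not merely an isotropic subalgebra: maximality, together with the nondegeneracy of the form induced by $B_\lambda$ on $\fg/\fg^\lambda$, is what forces any $g$ meeting the matching condition on the (possibly small) intersection $\Ad(g)\fp\cap\fp'$ to conjugate $\lambda$ all the way to $\lambda'$, and, in the case $\lambda'=\lambda$, $\fp'=\fp$, to lie in $P$ itself. This is the finite-field incarnation of the Lagrangian/symplectic step in Kirillov's method, and the associative hypotheses are what let it run without the exponential map of Theorem~\ref{orbit}.

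Granting the count, the remaining assertions follow formally. Taking $\lambda=\lambda'$ and $\fp=\fp'$, only the identity double coset contributes, so $\langle M^\lambda,M^\lambda\rangle_G=1$ and $M^\lambda$ is irreducible, giving (3). For the ``only if'' half of (4), distinct orbits yield inner product $0$, hence inequivalence; for the ``if'' half, when $\lambda'=\Ad^*(g)\lambda$ the map $\Ad(g)$ is an algebra automorphism of $\gl_n$ preserving $\fg$ (as $g\in G$), so $\fp':=\Ad(g)\fp$ is again an associative polarization and conjugation by $g$ gives $M^\lambda\cong M^{\lambda'}_{\fp'}$, after which (2) transports the conclusion to an arbitrary polarization of $\lambda'$. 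Finally (2) is cheap once (1) and (3) are in hand: for two associative polarizations $\fp_1,\fp_2$ of the same $\lambda$, the identity double coset already contributes $1$ to $\langle\Ind_{P_1}^G\eta_\lambda,\Ind_{P_2}^G\eta_\lambda\rangle_G$ (the two restrictions to $P_1\cap P_2$ coincide), so this inner product is positive, and two irreducible representations of equal dimension with nonzero inner product must be isomorphic.
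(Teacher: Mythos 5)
The paper itself contains no proof of this statement---it is imported wholesale from Panov \cite[Proposition 2.3]{Pa15}---so your attempt can only be judged on its own completeness, not against an in-paper argument. The routine parts of your write-up are correct: $\eta_\lambda$ is multiplicative precisely because $\fp$ is an associative subalgebra with $\lambda(\fp^2)=0$; part (1) follows from Proposition~\ref{panov}(i) together with~\eqref{dim eq}; the Mackey intertwining-number formula, together with the computation ${}^{g}\eta_\lambda(1+y)=\psi\bigl((\Ad^*(g)\lambda)(y)\bigr)$, correctly reduces (2)--(4) to counting the double cosets $PgP'$ on which $\Ad^*(g)\lambda-\lambda'$ vanishes on $\Ad(g)\fp\cap\fp'$; and your remark that one cannot shortcut this via the global count of Lemma~\ref{sameno}, because not every $\lambda$ need admit an associative polarization, is a genuinely good observation. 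The formal deductions of (2), (3) and the two halves of (4) from the count are also sound.

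The gap is that you never prove the count, and the count \emph{is} the theorem. The passage beginning ``maximality, together with the nondegeneracy of the form induced by $B_\lambda$ on $\fg/\fg^\lambda$, is what forces\ldots'' is a heuristic, not an argument: it gives no mechanism by which agreement of $\Ad^*(g)\lambda$ and $\lambda'$ on the possibly very small subspace $\Ad(g)\fp\cap\fp'$ propagates to $\lambda'\in\fO_\lambda$, nor any reason why, when $\lambda=\lambda'$ and $\fp=\fp'$, the identity coset is the \emph{only} contributing one (maximal isotropy of $\fp$ for $B_\lambda$ says nothing a priori about the restriction of the relevant forms to $\Ad(g)\fp\cap\fp$ for $g\notin P$). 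In the classical orbit method this is exactly the hard step, proved by induction on $\dim\fg$ through a central/Heisenberg reduction; in the present finite-field setting, where the exponential map of Theorem~\ref{orbit} is unavailable, supplying that induction using the associative structure (and the fact that every polarization contains $\fg^\lambda$) is precisely the content of Panov's proposition. As written, your text establishes part (1) and a correct reduction of (2)--(4) to an unproved claim; ``granting the count'' concedes the entire substance of those three parts.
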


 \section{Finite Groups of Good Types}

\begin{defn}\label{goodtype}
Let $G=1+\fg $ be a finite pattern group over $\BF_q$ with associated nilpotent algebra $\fg.$   
For  $S\in   \fg^t,$ let $\lambda_S\in \fg^*$ denote  the map $$\lambda_S: \fg\mapsto \BF_q \text{ given by }\lambda_S(x)=\tr (SX).$$ 
Then we call  $G$ is of {\bf good type,} if an associative polarization exists for any $\lambda_S,$ where  $S\in   \fg^t.$
 \end{defn}

 If $G$ is of good type, then by Lemma~\ref{sameno} and Theorem~\ref{2.3}, $$\widehat G=\{\Ind_{1+\fb_S}^{G}\psi_S\mid S\in \fO_G\},$$ where $\psi_S(1+x)=\psi(\tr Sx).$

In  \cite{Ni20}, \cite{Ni21} \cite{LN23}, and \cite{Ni24}, several series of finite pattern groups  are verified to be of good type, besides those pattern groups $G = 1+\fg$ admitting bijective exponential map between $\fg$ and $G$.   However, \cite{Ev11} provides examples of finite pattern groups which are not of good type.

\subsection{Unipotent radical of 4 parts}   
Let $U_{n_1, n_2, n_3, n_4}$ denote the unipotent radical of the standard parabolic subgroups $P_{n_1, n_2, n_3, n_4}$ of $U_n$, corresponding to the partition of $n=n_1+n_2+n_3+n_4,\ n_i\in \BN.$

\begin{thm}\label{4parts}
  Let $G:=U_{n_1, n_2, n_3, n_4}.$ Then $G$ is of good type. 
\end{thm}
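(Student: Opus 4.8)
The plan is to exhibit, for every $S\in\fg^{t}$, an explicit associative polarization of $\lambda_{S}$. First recall that the nilradical $\fg$ of the parabolic is itself an associative algebra: writing the index set as four consecutive blocks and $\fg_{ab}$ for the $(a,b)$ block of $\fg$ ($1\le a<b\le 4$), matrix multiplication sends $\fg_{ab}\fg_{bc}$ into $\fg_{ac}$ and annihilates all other products, so $\fg\cdot\fg\subseteq\fg$. Under the trace pairing $\fg^{*}\cong\fg^{t}$, so I write $S$ in strictly lower-triangular blocks $S_{ba}$ ($a<b$) and $\lambda_{S}(X)=\sum_{a<b}\tr(S_{ba}X_{ab})$. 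The non-primitive roots are $D^{\sharp}=\{(1,3),(2,4),(1,4)\}$, whence $\fg^{2}=\fg_{13}\oplus\fg_{24}\oplus\fg_{14}$; consequently a subspace $\fp=\bigoplus_{a<b}\fp_{ab}$ with $\fp_{ab}\subseteq\fg_{ab}$ is an associative polarization of $\lambda_{S}$ as soon as (i) $\fp_{ab}\fp_{bc}\subseteq\fp_{ac}$ for all $a<b<c$, (ii) $\tr(S_{ca}\,\fp_{ab}\fp_{bc})=0$ for all $a<b<c$, and (iii) $\dim\fp=\tfrac12(\dim\fg+\dim\fg^{\lambda_{S}})$. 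Indeed (i) makes $\fp$ an associative subalgebra, (i) and (ii) together force $B_{\lambda_{S}}(\fp,\fp)=0$, and an isotropic subspace attaining the dimension in (iii) is necessarily maximal isotropic, hence a polarization, by Proposition~\ref{panov}(i).

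Second, I reduce to a normal form for $S$. The property of admitting an associative polarization is preserved under the coadjoint action of $G$ (conjugating a polarization of $\lambda$ by $g\in G$ yields one of $\Ad^{*}(g)\lambda$, since $\Ad(G)\fg=\fg$ and conjugation preserves both products and the trace form) and under the action of the Levi $L=\prod_{i}\GL_{n_{i}}$, which sends $S_{ba}\mapsto g_{b}^{-1}S_{ba}g_{a}$ and likewise carries associative polarizations to associative polarizations. Using the Levi I first bring the deepest block $S_{41}$ to a partial identity $\bigl(\begin{smallmatrix}I&0\\0&0\end{smallmatrix}\bigr)$; I then use the coadjoint action of the root subgroups (each of which modifies a given lower block by adding to it the product of a neighbouring deeper block with the unipotent parameter) to clear the portions of the distance-two blocks $S_{31},S_{42}$ lying over the support of $S_{41}$, and finally normalize $S_{31},S_{42}$ and the primitive blocks $S_{21},S_{32},S_{43}$ to partial identities compatible with the pivots already fixed. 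The outcome is a normal form in which every block $S_{ba}$ is a $0/1$ matrix supported on distinct matrix units.

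Third, for such a normal form I build $\fp$ block by block out of matrix units. For each $(a,b)$ I let $\fp_{ab}$ be the span of those units $E^{ab}_{kl}$ whose row and column indices lie in prescribed index sets $R_{ab},C_{ab}$ read off from the pivot pattern of the normalized $S$; the selection rule is designed so that whenever a product $E^{ab}_{kl}E^{bc}_{lm}=E^{ac}_{km}$ is nonzero it again lies in $\fp_{ac}$, giving (i), while the matching entry $(S_{ca})_{mk}$ vanishes, giving (ii). This bookkeeping must be carried out simultaneously for all three target blocks $\fg_{13},\fg_{24},\fg_{14}$, including the triple products $\fg_{12}\fg_{23}\fg_{34}\subseteq\fg_{14}$ that are special to the four-part case. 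Lastly I compute $\dim\fp$ from the chosen index sets and match it against $\tfrac12(\dim\fg+\dim\fg^{\lambda_{S}})$, evaluating $\dim\fg^{\lambda_{S}}$ in terms of the ranks of the normalized blocks and their interactions.

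The main obstacle is the third step: with four parts the block $\fg_{14}$ receives contributions from three distinct pairings, namely $\fg_{12}\fg_{24}$, $\fg_{13}\fg_{34}$, and the triple product $\fg_{12}\fg_{23}\fg_{34}$, while the non-primitive blocks $S_{41},S_{42},S_{31}$ remain coupled through the coadjoint normalization. One therefore needs a single index-selection rule shown to satisfy associativity and the null condition (i)--(ii) without ever dropping a unit required by the dimension count (iii). I expect this to demand a case analysis according to the relative positions of the pivots of $S_{41}$, $S_{42}$ and $S_{31}$, the crux being to verify in each case that the resulting $\fp$ is closed under multiplication and attains the exact dimension $\tfrac12(\dim\fg+\dim\fg^{\lambda_{S}})$, which certifies maximality and hence that $\fp$ is a genuine associative polarization.
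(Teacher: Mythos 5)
Your reduction is the same as the paper's: by Theorem~\ref{2.3} and Proposition~\ref{panov}(i) it suffices to produce, for each $T\in\fg^{t}$, an associative subalgebra of $\fg$ annihilating $\lambda_T$ on its square and having codimension $\tfrac12\dim\fO_T$, and your observation that everything is controlled by $\lambda_T|_{\fg^{2}}$ with $\fg^{2}=\fg_{13}\oplus\fg_{24}\oplus\fg_{14}$ is also how the paper proceeds. But from there your proposal has a genuine gap, which you yourself flag: the entire third step --- the index-selection rule defining $\fp$, the verification that $\fp$ is multiplicatively closed and kills $\lambda_S$ on $\fp^{2}$, and the computation of $\dim\fg^{\lambda_S}$ against which $\dim\fp$ must be matched --- is a statement of intent (``I expect this to demand a case analysis\dots''), not an argument. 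Since that is exactly where the content of the theorem lies, nothing has been proved.

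Moreover, the route you choose makes the missing step harder than it needs to be. Your second step asserts that every $S$ can be brought, by the Levi and coadjoint actions, to a normal form in which all six blocks are simultaneously $0/1$ matrices on disjoint supports. That assertion is essentially the classification of orbits of the full parabolic on $\fg^{*}$ for a four-block nilradical: a nontrivial finiteness theorem (known for at most five blocks over algebraically closed fields), and over $\FF_q$ one must additionally rule out orbit-splitting to know that a rational $0/1$ representative exists; you give no argument for either point. It is also more than you need: since $B_{\lambda_S}(x,y)=\lambda_S(xy-yx)$ depends only on $\lambda_S|_{\fg^{2}}$, i.e.\ on $S_{31},S_{42},S_{41}$, the primitive blocks $S_{21},S_{32},S_{43}$ are irrelevant, and the paper gets by with the far weaker normalization $\spanrow(T_{31})\cap\spanrow(T_{41})=0$ and $\spancol(T_{42})\cap\spancol(T_{41})=0$, read off directly from the formula for $\Ad^{*}(1+X)T-T$. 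With that alone, the paper defines $\fb_T$ uniformly --- no pivots, no cases --- by the kernel conditions $Y_{23}T_{31}=0$, $T_{42}Y_{23}=0$, $T_{41}Y_{12}=0$, $Y_{34}T_{41}=0$, leaving $Y_{13},Y_{24},Y_{14}$ free; isotropy is then a one-line trace computation, and both codimension counts (for $\fb_T$ and for the stabilizer of $T$) reduce to the elementary rank bookkeeping of Lemma~\ref{lemma:codim}. If you want to salvage your approach, replace the normal-form step by this transversality normalization and describe $\fp$ by rank/kernel conditions in terms of $T$ rather than by matrix units.
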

\begin{proof}
By  Theorem \ref{2.3}, to show that $G =1+\fg $ is of good type, it suffices to show that there exists  a subalgebra $\fb_T$ of $\fg $, satisfying  
$\tr T\fb_T^2=0$ and  $\codim \fb_T=\frac{\dim\mathfrak O_T}{2}, $  for any $T  \in  \fg^t.$

Let $T \in \fg^{t}$ with blocks given by $T_{ij}$ with $T_{ij} = 0$ for $i\le j$.
We find the dimension of $\Stab_{\fg}(T)$, namely, the set $\{ X\in \fg \spacedvert [[X,T]]_{\fg^{t}} = 0 \}$.
Recall that $[\cdot]_{\fg^{t}}$ is the projection of $\gl_{n}$ to $\fg^{t}$.

Let $X \in \fg$.
First we note that
\begin{align*}
  \Ad^{*}(1+X)\cdot T - T  = [[X,T] (1+X)^{-1}]_{\fg^{t}} = [[[X,T]]_{\fg^{t}} (1+X)^{-1}]_{\fg^{t}}.
\end{align*}
Explicitly, we have
\begin{align*}
[[X,T]]_{\fg^{t}} =  \begin{pmatrix}
    0 & 0 & 0 & 0 \\
X_{23} T_{31} + X_{24} T_{41} & 0 & 0 & 0 \\
X_{34} T_{41} & -T_{31} X_{12} + X_{34} T_{42} & 0 & 0 \\
0 & -T_{41} X_{12} & -T_{41} X_{13} - T_{42} X_{23} & 0
  \end{pmatrix}.
\end{align*}
Thus $\Stab_{\fg}T$ consists of $X \in \fg$ satisfying
\begin{align*}
 X_{23} T_{31} + X_{24} T_{41} &= 0,\\
 X_{34} T_{41} &= 0,\\
 -T_{31} X_{12} + X_{34} T_{42} &= 0,\\
 -T_{41} X_{12} &= 0 ,\\
 -T_{41} X_{13} - T_{42} X_{23} &= 0  .
\end{align*}

The expression of $[[X,T]]_{\fg^{t}}$ shows that we may choose another $T$ in the same coadjoint orbit such that  $\spanrow(T_{31}) \cap \spanrow(T_{41}) = 0$ and $\spancol(T_{42}) \cap \spancol(T_{41}) = 0$, where we use $\spanrow$ (resp. $\spancol$) to denote the row (resp. column) space of the matrix.
Then for such a $T$,
$\Stab_{\fg}T$ consists of $X$ satisfying
\begin{align*}
 X_{23} T_{31}                   &= 0,   \\
 X_{24} T_{41}                   &=0,    \\
 X_{34} T_{41}                   &= 0,   \\
 -T_{31} X_{12} + X_{34} T_{42}  &= 0,   \\
 T_{41} X_{12}                  &= 0 ,  \\
 T_{41} X_{13}                  &= 0  , \\
 T_{42} X_{23} &=0.    
\end{align*}
The constraint on $X_{13}$ contributes 
\begin{equation*}
  \rank(T_{41})n_{3}
\end{equation*}
to the codimension.
The constraint on $X_{24}$ contributes 
\begin{equation*}
  \rank(T_{41})n_{2}
\end{equation*}
to the codimension.
By Lemma~\ref{lemma:codim}\eqref{item:codim-1}, the constraints on $X_{23}$ contribute 
\begin{equation*}
  \rank(T_{31})n_{2} + \rank(T_{42})n_{3} - \rank(T_{31})\rank(T_{42})
\end{equation*}
to the codimension.
There are three equations giving constraints on $X_{12}$ and $X_{34}$, which by Lemma~\ref{lemma:codim}\eqref{item:codim-2} contribute 
  \begin{multline*}
    \rank(T_{41})n_{2} + \rank(T_{41})n_{3} \\
    + \rank(T_{31})\rank(T_{42}) + (n_{2} - \rank(T_{42})) \rank(T_{31}) + (n_{3} -\rank(T_{31})) \rank(T_{42})
  \end{multline*}
to the codimension.
Thus the codimension of $\Stab_{\fg}T$ in $\fg$ is 
\begin{equation}\label{eq:codim-stab}
  2( n_{3} \rank(T_{41}) + n_{2} \rank(T_{41}) + n_{2} \rank(T_{31}) + n_{3} \rank(T_{42}) - \rank(T_{31}) \rank(T_{42})).
\end{equation}

Next we exhibit a subalgebra $\fb_{T}$ of $\fg$ of codimension equal to half of \eqref{eq:codim-stab} such that $\tr T\fb_T^2=0$.
For $X,Y \in \fg$, we have
\begin{align*}
  XY =
  \begin{pmatrix}
    0 & 0 & X_{12} Y_{23} & X_{12} Y_{24} + X_{13} Y_{34} \\
0 & 0 & 0 & X_{23} Y_{34} \\
0 & 0 & 0 & 0 \\
0 & 0 & 0 & 0
  \end{pmatrix}.
\end{align*}
Let $\fb_{T}$ be the subalgebra of $\fg$ consisting of  $Y$ satisfying
\begin{align*}
  Y_{23} T_{31}  &= 0,\\
  T_{42} Y_{23} &= 0,\\
  T_{41} Y_{12}  &= 0,\\
  Y_{34} T_{41} &= 0.
\end{align*}
With these conditions in place, we have $\tr TXY =0$ for all $X,Y\in \fb_{T}$ or  in other words $\tr T\fb_T^2=0$.
The contribution to the codimension by the constraints on $Y_{12}$ and $Y_{34}$ is $\rank(T_{41})(n_{2}+n_{3})$ and
that for $Y_{23}$ is $n_{2}\rank(T_{31}) + n_{3}\rank(T_{42}) - \rank(T_{31}) \rank(T_{42})$ by Lemma~\ref{lemma:codim}\eqref{item:codim-1}.
Thus the codimension of $\fb_{T}$ in $\fg$ is $(n_{2} + n_{3})\rank(T_{41}) + n_{2}\rank(T_{31}) + n_{3}\rank(T_{42}) - \rank(T_{31}) \rank(T_{42})$ which is exactly half of \eqref{eq:codim-stab}.

\end{proof}
 
\begin{lem}\label{lemma:codim}
  \begin{enumerate}
  \item \label{item:codim-1}The codimension of the subspace 
\begin{equation*}
  \{ X_{23} \in \mathrm{Mat}_{n_{2},n_{3}} \spacedvert T_{42}X_{23} = 0 , X_{23}T_{31} = 0 \}
\end{equation*}
of $\mathrm{Mat}_{n_{2},n_{3}}$
 is 
\begin{equation*}
  n_{3} \rank(T_{42})  + n_{2} \rank(T_{31}) - \rank(T_{31})\rank(T_{42}),
\end{equation*}
where $T_{31},T_{42}$ are suitable matrices such that $T_{42}X_{23} $ and $X_{23}T_{31} $ are defined.
\item \label{item:codim-2}
 The codimension of the subspace 
\begin{equation*}
  \{ X_{12}\in \mathrm{Mat}_{n_{1},n_{2}} , X_{34} \in \mathrm{Mat}_{n_{3},n_{4}}  \spacedvert T_{31}X_{12} = X_{34}T_{42}, T_{41}X_{12} = 0, X_{34}T_{41} = 0 \}
\end{equation*}
of $\mathrm{Mat}_{n_{1},n_{2}} \times \mathrm{Mat}_{n_{3},n_{4}} $  is
  \begin{multline*}
    \rank(T_{41})n_{2} + \rank(T_{41})n_{3} \\
    + \rank(T_{31})\rank(T_{42}) + (n_{2} - \rank(T_{42})) \rank(T_{31}) + (n_{3} -\rank(T_{31})) \rank(T_{42})
  \end{multline*}
where $T_{31},T_{42},T_{41}$ are suitable matrices such that $T_{31}X_{12}$, $X_{34}T_{42}$, $T_{41}X_{12}$ and $X_{34}T_{41}$ are defined and $\spanrow(T_{31}) \cap \spanrow(T_{41}) = 0$ and $\spancol(T_{42}) \cap \spancol(T_{41}) = 0$.
\end{enumerate}
\end{lem}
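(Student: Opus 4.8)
My plan is to treat each part as computing the dimension of the solution space of a homogeneous linear system in matrix entries and then subtracting from the ambient dimension; throughout I abbreviate $r_{31}=\rank(T_{31})$, $r_{41}=\rank(T_{41})$, $r_{42}=\rank(T_{42})$. For part \eqref{item:codim-1} the two conditions decouple into a column constraint and a row constraint. Reading $X_{23}$ as a linear map, $T_{42}X_{23}=0$ forces every column of $X_{23}$ to lie in $\ker T_{42}$, a subspace of $\mathbb{F}^{n_2}$ of dimension $n_2-r_{42}$, while $X_{23}T_{31}=0$ forces every row of $X_{23}$ to lie in the left kernel $\{w\in\mathbb{F}^{n_3}\mid wT_{31}=0\}$, of dimension $n_3-r_{31}$. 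Choosing bases adapted to these two subspaces, the admissible $X_{23}$ are exactly those supported in a block of size $(n_2-r_{42})\times(n_3-r_{31})$, so the solution space has dimension $(n_2-r_{42})(n_3-r_{31})$; subtracting from $n_2n_3$ yields the claimed codimension $n_3r_{42}+n_2r_{31}-r_{31}r_{42}$. This part is essentially immediate.

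For part \eqref{item:codim-2} I would express the codimension as the rank of the linear map
\[
  \Phi(X_{12},X_{34}) = (\,T_{31}X_{12}-X_{34}T_{42},\ T_{41}X_{12},\ X_{34}T_{41}\,),
\]
whose kernel is precisely the subspace in question. Writing $\Phi=(\Phi_1,\Psi)$ with $\Phi_1(X_{12},X_{34})=T_{31}X_{12}-X_{34}T_{42}$ and $\Psi(X_{12},X_{34})=(T_{41}X_{12},X_{34}T_{41})$, the standard identity $\rank\Phi=\rank\Psi+\dim\Phi_1(\ker\Psi)$ splits the count into a routine term and a substantial one. Since $\Psi$ involves $X_{12}$ and $X_{34}$ only through independent one-sided multiplications by $T_{41}$, its image is the product of all matrices with column space in $\spancol(T_{41})$ and all matrices with row space in $\spanrow(T_{41})$, giving $\rank\Psi=r_{41}n_2+r_{41}n_3$; these account for the first two summands of the asserted formula.

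The real work is computing
\[
  \dim\Phi_1(\ker\Psi) = \dim\{\,T_{31}X_{12}-X_{34}T_{42}\mid T_{41}X_{12}=0,\ X_{34}T_{41}=0\,\},
\]
which I would identify with the sum $A+B$ of $A=\{T_{31}X_{12}\mid T_{41}X_{12}=0\}$ and $B=\{X_{34}T_{42}\mid X_{34}T_{41}=0\}$. For $A$, the columns of $X_{12}$ range over $\ker T_{41}$, so $A$ consists of the matrices with column space in $T_{31}(\ker T_{41})$; this is exactly where the hypothesis $\spanrow(T_{31})\cap\spanrow(T_{41})=0$ is used, since it forces $\rank\begin{pmatrix}T_{31}\\T_{41}\end{pmatrix}=r_{31}+r_{41}$, hence $\dim(\ker T_{31}\cap\ker T_{41})=n_1-r_{31}-r_{41}$, and therefore $T_{31}(\ker T_{41})$ is all of $\spancol(T_{31})$, giving $\dim A=n_2r_{31}$. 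Dually, using $\spancol(T_{42})\cap\spancol(T_{41})=0$ to get $\rank[T_{41}\ T_{42}]=r_{41}+r_{42}$, one finds that $B$ is all matrices with row space in $\spanrow(T_{42})$, so $\dim B=n_3r_{42}$. Since $A\cap B$ consists of the matrices with column space in $\spancol(T_{31})$ and row space in $\spanrow(T_{42})$, its dimension is $r_{31}r_{42}$, whence $\dim(A+B)=n_2r_{31}+n_3r_{42}-r_{31}r_{42}$. Adding $\rank\Psi$ gives $\rank\Phi=r_{41}n_2+r_{41}n_3+n_2r_{31}+n_3r_{42}-r_{31}r_{42}$, which is the stated expression after rewriting $(n_2-r_{42})r_{31}+(n_3-r_{31})r_{42}+r_{31}r_{42}=n_2r_{31}+n_3r_{42}-r_{31}r_{42}$. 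The one genuinely delicate point is the role of the two transversality hypotheses: they are exactly what is needed to collapse $T_{31}(\ker T_{41})$ and $\{w\mid wT_{41}=0\}T_{42}$ onto the full spaces $\spancol(T_{31})$ and $\spanrow(T_{42})$, and without them the relevant ranks would drop and the stated formula would fail.
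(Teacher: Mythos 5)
Your proof is correct. For part (1) you and the paper do essentially the same computation: the paper conjugates $T_{42}$ and $T_{31}$ into the canonical form $\smat{I}{0}{0}{0}$ by invertible row and column operations and then counts, which is just a coordinate version of your observation that the solution space consists of the matrices whose columns lie in $\ker T_{42}$ and whose rows lie in the left kernel of $T_{31}$, so that its dimension is the product $(n_{2}-r_{42})(n_{3}-r_{31})$. For part (2), however, your route is genuinely different. The paper normalizes sequentially: it puts $T_{41}$ into the canonical form $U_{41}$, invokes the transversality hypotheses to arrange $T_{31}=(0\ \ S_{31})$ and $T_{42}$ with zero top block, concludes $X_{12}^{(1)}=0$ and $X_{34}^{(1)}=0$ (the $r_{41}(n_{2}+n_{3})$ term), and then reduces the residual equation $S_{31}X_{12}^{(2)}=X_{34}^{(2)}S_{42}$ to canonical form and counts blocks. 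You instead compute the codimension as $\rank\Phi$ for the defining linear map, split $\Phi=(\Phi_{1},\Psi)$, and use the identity $\rank\Phi=\rank\Psi+\dim\Phi_{1}(\ker\Psi)$ together with inclusion--exclusion for $\dim(A+B)$; all of your intermediate dimension counts ($\rank\Psi=r_{41}(n_{2}+n_{3})$, $\dim A=n_{2}r_{31}$, $\dim B=n_{3}r_{42}$, $\dim(A\cap B)=r_{31}r_{42}$) check out. Your version has two advantages: it is coordinate-free, and it isolates exactly where the hypotheses $\spanrow(T_{31})\cap\spanrow(T_{41})=0$ and $\spancol(T_{42})\cap\spancol(T_{41})=0$ are needed -- namely to force $T_{31}(\ker T_{41})=\spancol(T_{31})$ and its dual statement -- whereas in the paper they are absorbed into the normalization step ``we may further require $T_{31}=(0\ \ S_{31})$,'' which is asserted without proof and itself requires a small argument of precisely this kind. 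What the paper's computation buys in exchange is explicit block normal forms for all matrices involved, keeping the whole argument at the level of elementary matrix manipulation.
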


\begin{proof}
  Consider the first part.
  Let $r_{42} = \rank(T_{42})$ and $r_{31} = \rank(T_{31})$.
  Let $P_{1},P_{2},P_{3},P_{4}$ be invertible matrices such that
  \begin{align*}
    P_{4}T_{42}P_{2} = \begin{pmatrix}
I_{r_{42}}  & 0 \\
0           & 0
    \end{pmatrix} =: U_{42} , \quad
        P_{3}T_{31}P_{1} = \begin{pmatrix}
I_{r_{31}}  & 0 \\
0           & 0
    \end{pmatrix} =: U_{31}.
  \end{align*}
  Then the equations defining the subspace are equivalent to
  \begin{align*}
    U_{42} P_{2}^{-1}X_{23} P_{3}^{-1} &= 0\\
    P_{2}^{-1} X_{23} P_{3}^{-1} U_{31} &= 0.
  \end{align*}
  Renaming $P_{2}^{-1}X_{23} P_{3}^{-1}$ to $X_{23}$.
  It then suffices to find the codimension of the subspace of  $X_{23} \in \mathrm{\Mat}_{n_{2},n_{3}}$ defined by
  \begin{align*}
    U_{42} X_{23} &= 0\\
    X_{23} U_{31} &= 0.
  \end{align*}
  It is elementary to see that the codimension is given by
  \begin{align*}
    n_{3} r_{42} + n_{2}r_{31} -r_{31}r_{42}
  \end{align*}
  as desired.

  Consider the second part.
    Let $r_{42} = \rank(T_{42})$, $r_{31} = \rank(T_{31})$ and $r_{41} =\rank(T_{41})$.
  Let $P_{4}, P_{1}$ be invertible matrices such that $P_{4}T_{41}P_{1} = \begin{pmatrix}
I_{r_{41}}  & 0 \\
0           & 0
  \end{pmatrix} =: U_{41} $.
  Renaming $P_{1}^{-1}X_{12}$ to $X_{12}$ and $X_{34}P_{4}^{-1}$ to $X_{34}$, we may assume the equations are given by
  \begin{align*}
    T_{31} X_{12} &= X_{34} T_{42},\\
    U_{41} X_{12} &= 0,\\
    X_{34} U_{41} &= 0, 
  \end{align*}
  where we have $\spanrow(T_{31}) \cap \spanrow(U_{41}) = 0$ and $\spancol(T_{42}) \cap \spancol(U_{41}) = 0$.
  Then we may further require that $T_{31} = (0\ \  S_{31})$ for some $S_{31}$ of size $n_{3} \times (n_{1} - r_{41})$ and of rank $r_{31}$ and
  $T_{42} = \begin{pmatrix}
    0 \\ S_{42}
  \end{pmatrix}$ for some $S_{42}$ of size $(n_{4}-r_{42})\times n_{2}$ and of rank $r_{42}$.

  Write $X_{12} = \begin{pmatrix}
    X_{12}^{(1)} \\  X_{12}^{(2)}
  \end{pmatrix}$ where $X_{12}^{(1)}$ has $r_{41}$ rows and $X_{12}^{(2)}$ has $n_{1}-r_{41}$ rows.
  Write $X_{34} =     ( X_{34}^{(1)} \ \  X_{34}^{(2)}) $ where $X_{34}^{(1)}$ has $r_{41}$ columns and $X_{34}^{(2)}$ has $n_{4}-r_{41}$ columns.
  Then the equalities $U_{41} X_{12} = 0$ and $X_{34}U_{41}$ shows that $X_{12}^{(1)} = 0$ and $X_{34}^{(1)}=0$, which gives the contribution of $r_{41} n_{2} + r_{41} n_{3}$ to the codimension.

  It remains to analyse the equations
  \begin{equation}\label{eq:codim}
 S_{31} X_{12}^{(2)} = X_{34}^{(2)} S_{42}.
  \end{equation}
  Let $Q_{1},Q_{2},Q_{3},Q_{4}$ be invertible matrices such that
  \begin{align*}
    Q_{3} S_{31} Q_{1} = \begin{pmatrix}
I_{r_{31}}  & 0 \\
0           & 0
    \end{pmatrix} =: U_{31} , \quad
        Q_{4}S_{42}Q_{2} = \begin{pmatrix}
I_{r_{42}}  & 0 \\
0           & 0
    \end{pmatrix} =: U_{42}.
  \end{align*}
  Then \eqref{eq:codim} is equivalent to
  \begin{align*}
 U_{31} Q_{1}^{-1} X_{12}^{(2)} Q_{2} &= Q_{3} X_{34}^{(2)} Q_{4}^{-1} U_{42}.    
  \end{align*}

Renaming $Q_{1}^{-1} X_{12}^{(2)} Q_{2}$ to $X_{12}^{(2)}$ and $Q_{3} X_{34}^{(2)} Q_{4}^{-1} $ to $X_{34}^{(2)}$, we get the equations
\begin{align*}
  U_{31}  X_{12}^{(2)}  &=  X_{34}^{(2)}U_{42} ,
\end{align*}
which contributes $r_{31} r_{42} + r_{31}(n_{2} - r_{42}) + r_{42} (n_{3}-r_{31})$ to the codimension.
Adding up all the contributions, we find the codimension as desired.
\end{proof}

 \begin{defn}
Given a pattern group $G=1+\fa$, if $\Delta_G\subset \Delta_n$ and $\Delta_G\not\subset \Delta_{n-1},$
then we say that $G$ (resp. $\fa$) has u-rank n.
\end{defn}

 \begin{defn}Given a pattern group $G=1+\fa$,
for any  $0\ne T\in \fa^t,$ if there exists a subalgebra $\fb_T$ of $\fa,$  with the same u-rank as $\fa,$ satisfying $\tr T(\fb_T)^2=0, $  
then we call  $(T, \fb_T)$ an inducible pair for $G$.
\end{defn}
When we have an  inducible pair  $(T, \fb_T)$, $\psi_T(1+x):=\psi(\tr Tx)$ is a well-defined character of $1+\fb_T$.

\begin{thm}\label{inducible} Assume that $G=1+\fa$ has u-rank $n\ge 2.$
For  any $0\ne T\in \fa^t,$ there exists $\fb_T$ such that  $(T, \fb_T)$ is an inducible pair.
\end{thm}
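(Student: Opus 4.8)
The plan is to exhibit, uniformly in $T$, a single subalgebra of $\fa$ whose associative square vanishes and which still reaches the index $n$. Recall that the three defining conditions of an inducible pair ask only that $\fb_T$ be a subalgebra of $\fa$, that it have the same u-rank $n$ as $\fa$, and that $\tr T\fb_T^{2}=0$; there is no maximality or codimension requirement, so the notion is strictly weaker than that of an associative polarization. Consequently, if I can produce a subalgebra $\fb_T$ with $\fb_T^{2}=0$, the trace condition becomes automatic and completely independent of $T$, and the only remaining content is to verify the subalgebra property and the preservation of u-rank.

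For the construction I would feed in the u-rank hypothesis directly. Since $\Delta_{G}\subseteq\Delta_{n}$ but $\Delta_{G}\not\subseteq\Delta_{n-1}$, some root of $\fa$ must involve the index $n$, and because every element of $\Delta_{n}$ carries its larger index in second position, such a root is necessarily of the form $\varepsilon_{i_{0}}-\varepsilon_{n}$. I then set
\[
  D_{n}:=\{\,\varepsilon_{i}-\varepsilon_{n}\in\Delta_{G}\,\},\qquad \fb_{T}:=\fg_{D_{n}},
\]
the span of those root spaces of $\fa$ supported in the last column. Any two matrices supported in the last column multiply to zero, since $E_{in}E_{jn}=0$ whenever $j<n$; hence $D_{n}$ is (vacuously) closed, $\fb_{T}$ is an associative subalgebra of $\fa$, and $\fb_{T}^{2}=0$. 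Its u-rank equals $n$ because $D_{n}\subseteq\Delta_{n}$ while $D_{n}\ni\varepsilon_{i_{0}}-\varepsilon_{n}\notin\Delta_{n-1}$. Finally $\tr T\fb_{T}^{2}=\tr T\cdot 0=0$, so $(T,\fb_{T})$ satisfies all three requirements and is an inducible pair.

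I do not expect a serious obstacle: the crux is the elementary observation that the last-column part of $\fa$ is a square-zero subalgebra, which settles the subalgebra and trace conditions simultaneously, while the u-rank is handed over by the hypothesis $\Delta_{G}\not\subseteq\Delta_{n-1}$. I would note that neither $T\neq 0$ nor $n\ge 2$ is used in an essential way here — the hypothesis $T\neq 0$ only serves to make the associated character $\psi_{T}$ potentially nontrivial, and $n\ge 2$ merely guarantees that $\fa$ (hence the pair) is nonzero. If a downstream application needed $\psi_{T}$ to be nontrivial on $1+\fb_{T}$, one would instead have to adapt $\fb_{T}$ to $T$; but for the bare existence statement asserted in the theorem, the uniform square-zero choice is the cleanest and suffices.
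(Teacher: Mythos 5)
Your construction is correct against the paper's literal definitions, but it takes a genuinely different route from the paper's proof. The paper argues by induction on the u-rank: it decomposes $G=M\ltimes Z$, where $\Delta_{\fz}$ consists of the last-column roots of $\Delta_{\fa}$ and $\Delta_{\fm}$ of the remaining ones, replaces $T$ by a normalized representative of its coadjoint orbit (a step that itself tacitly requires knowing that inducible pairs transfer along coadjoint orbits), applies the inductive hypothesis to the restriction $T'$ of $T$ to $\fm$ to obtain an inducible pair $(T',\fc)$, and then enlarges $\fc$ by a suitably constrained subspace $\fn^{t}$ of the last column, taking $\fb_T=\begin{pmatrix}\fc&\fn^{t}\\0&0\end{pmatrix}$. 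You instead take for $\fb_T$ the full last-column part $\fg_{D_n}$ of $\fa$, observe that it is a square-zero pattern subalgebra of u-rank $n$, and are done: no induction, no passage to an orbit representative, and no dependence on $T$ at all (indeed a single root space $\fg_{\varepsilon_{i_0}-\varepsilon_n}$ would already suffice). What your argument buys is brevity, and it exposes a real weakness of the formulation: because the definition of an inducible pair imposes no codimension or maximality requirement and says nothing about irreducibility of $\Ind_{1+\fb_T}^{G}\psi_T$, Theorem \ref{inducible} as stated is nearly vacuous. What the paper's construction buys is a subalgebra that is large and adapted to $T$ --- the kind of object one needs if, as the name ``inducible pair'' and the surrounding results (Theorems \ref{monomial} and \ref{degreeq}, and the condition $\codim\fb_T=\tfrac12\dim\fO_T$ for irreducibility in Theorem \ref{2.3}) suggest, the real aim is for $\Ind_{1+\fb_T}^{G}\psi_T$ to be irreducible. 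Your square-zero $\fb_T$ will in general have codimension far larger than $\tfrac12\dim\fO_T$, so the character it induces is typically highly reducible; you prove the stated theorem but not the evidently intended, stronger one, and it would strengthen your write-up to flag this mismatch between the definition and the intent explicitly.
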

\begin{proof}

We apply induction on  the u-rank  $n$.
When $n=2,$  for any $T\in \fa^t,$ we may choose $\fb_T=\fa$. Then $\tr T(\fb_T)^2=0,$ since $\fa^2=0.$ 
Assume that the statement holds for any pattern group with u-rank $n\le k.$

For $1+\fa$ with u-rank $n=k+1$ and $T=(T_{i,j})\in \fa^t,$ by choosing suitable representative in the same coadjoint orbit, 
we may assume that $T_{n,i}=0,$ if both $(j,i), (i, n)\in \Delta_\fa $ and $T_{n,j}\ne 0,$  
for some $j<i$.
Let $T'$ be the restriction of $T$ to $M=1+\fm$, where $G=M\ltimes Z$, where $Z=1+\fz,$ and  
$\Delta_{\fz}=\{(i, n)\mid (i, n)\in \Delta_\fa\}$ and $\Delta_{\fm}=\{(i, j)\mid (i, j)\in \Delta_\fa, j<n\}.$

By induction assumption, let $\fc$ be a subalgebra of $\fm$  such that $(T', \fc)$ is an inducible pair. 
Let $$\fn_1=\{(n_1, \cdots, n_k)\mid n_i=0 \text{ if }(j,i), (i,n) \in \Delta_\fa\text{ and }T_{n,j}\ne 0\}.$$
Let $\fb=\begin{pmatrix}\fc&\fn^t\\
0&0\end{pmatrix}\subset \fa,$  where $ \fn_1\supset \fn$ and $$\fn:=\{(n_1, \cdots, n_k)\mid n_r=0 \text{ if for some }1\le j\le k, 
(j,r)\in \Delta_\fm,\text{ and }(r,n)\in \Delta_\fz\setminus\Delta_{\fn_1^{t}}\}.$$
 Then $(T, \fb)$ is an inducible pair. 
 
\end{proof}

We recall 2 earlier results  about inducible pairs.

 \begin{thm}\cite[Theorem 3.1]{LN23}\label{monomial}
For any finite pattern group $G=1+\fa$ with pattern algebra $\fa$, given any monomial matrix  $T=(t_{i,j})\in \fa^t$,
 there exists a pattern subgroup $H_T$ of $G$,  associated to the closed set 
$ \Delta_{H_T} $ defined inductively, such that
  $\Ind_{H_T}^{G}\psi_T$ is an irreducible representation of $G$, where $\psi_T(x)=:\psi(trTx)$ for $x\in H_T$.
Moreover, for monomial $T,\ S \in \fa^t,$ 
$$\Ind_{H_T}^{G}\psi_T \cong\Ind_{H_S}^{G}\psi_S\text{ if and only if }T=S.$$ 	
\end{thm}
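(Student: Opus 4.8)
The plan is to induct on the u-rank $n$ of $G=1+\fa$, peeling off the last column exactly as in the proof of Theorem~\ref{inducible} and feeding each stage into Clifford theory (Theorem~\ref{Clifford}), which also produces the inductive definition of $\Delta_{H_T}$. The base case $\fa=0$ is trivial, so assume the result for all pattern groups of smaller u-rank. Decompose $G=M\ltimes Z$, where $Z=1+\fz$ is the last-column subgroup ($\Delta_\fz=\{(i,n)\in\Delta_\fa\}$) and $M=1+\fm$ collects the roots with second index $<n$. Then $\fz^2=0$, so $Z$ is abelian and normal in $G$, while $M$ and all its pattern subgroups have u-rank $<n$. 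Split $T=T_M+T_Z$ with $T_M\in\fm^t$ and $T_Z\in\fz^t$. Because $T$ is monomial, its last row carries at most one nonzero entry $t_{n,j_0}$, so $\chi:=\psi_{T_Z}$ is either trivial or supported at the single root $(j_0,n)$.

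The heart of the step is to identify $R_\chi=\Stab_M(\chi)$ and check it is again a pattern group. A commutator computation gives $R_\chi=1+\mathfrak r$, where $\mathfrak r$ is $\fm$ with the root spaces $(j_0,b)$ deleted for every $b$ with $(b,n)\in\Delta_\fz$. That $\mathfrak r$ is a subalgebra uses closedness of $\Delta_\fa$: if a deleted $(j_0,b)$ arose as $(j_0,c)+(c,b)$ with both summands surviving in $\mathfrak r$, then $(c,b),(b,n)\in\Delta_\fa$ would force $(c,n)\in\Delta_\fa$, which would already delete $(j_0,c)$ — a contradiction. Restricting $T_M$ to $\mathfrak r^t$ yields a monomial $T'$, and the induction hypothesis applied to $R_\chi$ gives a pattern subgroup $H'=1+\fh'\subseteq R_\chi$ with $\Ind_{H'}^{R_\chi}\psi_{T'}$ irreducible. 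Setting $\fh_T=\fh'\oplus\fz$ and $H_T=H'\ltimes Z$, closedness of $\fh'$ and $\fz$ together with $\fh'\fz\subseteq\fz$ and $\fz\fh'=0$ make $\fh_T$ a pattern algebra of u-rank $n$, and a direct check gives $\tr T\,\fh_T^2=0$, so $\psi_T$ is a character of $H_T$. Transitivity of induction and the projection formula then identify $\Ind_{H_T}^G\psi_T$ with $\Ind_{R_\chi\ltimes Z}^G\big((\Ind_{H'}^{R_\chi}\psi_{T'})\otimes\chi\big)$, which is irreducible by Theorem~\ref{Clifford}(1).

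For the uniqueness clause I would run the same induction. If $\Ind_{H_T}^G\psi_T\cong\Ind_{H_S}^G\psi_S$, restricting to the abelian normal $Z$ shows the $M$-orbits of $\psi_{T_Z}$ and $\psi_{S_Z}$ coincide. For a monomial last row $T_Z=t\,E_{n,j_0}$, the $M$-action can only add entries in columns to the right of $j_0$ while fixing the coefficient at $j_0$, so the leftmost nonzero column and its coefficient are orbit invariants; hence $T_Z=S_Z$. Theorem~\ref{Clifford}(2) then forces the inner representations to agree, and the induction hypothesis for $R_\chi$ gives $T'=S'$. The decisive use of monomiality is that the entries of $T_M$ at the roots $(j_0,b)$ deleted in forming $\mathfrak r$ all vanish — column $j_0$ already contains $t_{n,j_0}\ne0$, so monomiality kills every other entry of that column — whence restriction to $\mathfrak r^t$ is lossless and $T'=S'$ upgrades to $T_M=S_M$. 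Combining, $T=S$.

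The main obstacle I expect is precisely the bookkeeping around $R_\chi$: verifying that the stabilizer of the last-column character is itself a pattern subgroup (so the induction can recurse inside the pattern-group world) and that restriction of $T$ to it loses no information. Both hinge on the interplay between monomiality of $T$ and closedness of $\Delta_\fa$, and choosing the orbit representative so that $T_Z$ is genuinely a single root entry is the place where the argument needs the most care.
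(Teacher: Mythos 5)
The paper never proves Theorem~\ref{monomial}: it is imported verbatim from \cite{LN23} as background, so there is no in-paper proof to measure your argument against. Judged on its own terms, your proof is correct, and it is precisely the argument this paper's framework suggests: the last-column decomposition $G=M\ltimes Z$ is the same device the paper uses in its proof of Theorem~\ref{inducible}, and Theorem~\ref{Clifford} is applied in exactly the role it is set up for. The load-bearing steps all check out. The stabilizer of $\chi=\psi_{T_Z}$ is indeed $1+\mathfrak{r}$ with $\Delta_{\mathfrak{r}}=\Delta_{\fm}\setminus\{(j_0,b)\mid (b,n)\in\Delta_{\fz}\}$: your closedness argument shows $\mathfrak{r}$ is a pattern subalgebra, and the same argument applied to powers of $x$ kills all higher-order terms of $[mT_Zm^{-1}]_{\fz^t}$, so no cancellation subtleties arise in identifying $R_\chi$. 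The condition $\tr T\fh_T^2=0$ does hold, combining $\tr T'(\fh')^2=0$ (from the induction hypothesis) with $h_{j_0,b}=0$ for $h\in\fh'\subseteq\mathfrak{r}$, which annihilates $\tr(T_Z\fh'\fz)$. The identification of $\Ind_{H_T}^{G}\psi_T$ with $\Ind_{R_\chi\ltimes Z}^{G}\bigl(\pi\otimes\chi\bigr)$ via transitivity and the projection formula is legitimate because the $R_\chi$-invariant character $\chi$ extends to $R_\chi\ltimes Z$. For uniqueness, every element of the $M$-orbit of $tE_{n,j_0}$ has entry $t$ in column $j_0$ of the last row and zeros strictly to its left, so the orbit determines the pair $(j_0,t)$; and since $S$ then shares the entry $t_{n,j_0}\ne 0$, monomiality of both $T$ and $S$ forces column $j_0$ of $T_M$ and $S_M$ to vanish, making the passage to $\mathfrak{r}^t$ lossless, so the inductive conclusion $T'=S'$ upgrades to $T=S$. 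One small correction to your closing worry: no orbit representative ever needs to be chosen here, since monomiality of $T$ already forces the last row to be a single entry; choosing a good representative is what the paper's proof of Theorem~\ref{inducible} must do because there $T$ is arbitrary, but it is not an issue in this theorem.
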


\begin{thm}\cite[Theorem 2.2]{Ni24}\label{degreeq} 
Let $G=1+\fa$ be a pattern group with pattern algebra $\fa$ over the finite field $\BF_q$. Then every degree q irreducible character of $G$ is isomorphic to  $$\Ind_{1+\fb}^{G }\psi_T, \text{ for some }T\in \fa^t , 
$$
where $\fb$ is a pattern subalgebra of $\fa$ satisfying $\tr T\fb^2=0.$  
Moreover, if $\Ind_{1+\fb }^{G }\psi_S\cong \Ind_{1+\fc }^{G }\psi_{T},$ then
$S$ and $T$ are in the same coadjoint orbit.
\end{thm}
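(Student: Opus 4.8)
The plan is to reduce the statement to a single existence fact and to read off uniqueness directly from Panov's theorem.

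\emph{Reduction.} First note that $\Ind_{1+\fb}^{G}\psi_{T}$ has dimension $[G:1+\fb]=q^{\codim\fb}$, so a character of degree $q$ forces $\codim\fb=1$. Next, the condition $\tr T\fb^{2}=0$ gives, for $X,Y\in\fb$, $\lambda_{T}([X,Y])=\tr(TXY)-\tr(TYX)=0$, so $\fb$ is isotropic for the form $B_{\lambda_T}$. Since $\chi$ is nonlinear we have $B_{\lambda_T}\neq 0$, and an isotropic subspace of codimension $1$ can exist only when $B_{\lambda_T}$ has rank exactly $2$; then $\fb$ is a maximal isotropic subspace containing the stabilizer $\fa^{T}$. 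By Proposition~\ref{panov}(i) this means $\codim\fa^{T}=2$, so $|\fO_{T}|=q^{2}$ by \eqref{dim eq}, and $\fb$ is precisely an associative polarization of $T$ in the sense of Definition~\ref{asspolarization}. Consequently $\Ind_{1+\fb}^{G}\psi_{T}=M^{T}$ in the notation of Theorem~\ref{2.3}, and the theorem becomes the assertion that the degree-$q$ irreducibles are exactly the $M^{T}$ attached to the orbits of cardinality $q^{2}$.

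\emph{Uniqueness.} This half is now immediate. If $\Ind_{1+\fb}^{G}\psi_{S}\cong\Ind_{1+\fc}^{G}\psi_{T}$ with both sides of degree $q$, then by the reduction $\fb,\fc$ are associative polarizations of $S,T$, so the two representations are $M^{S}$ and $M^{T}$; by Theorem~\ref{2.3}(4) they are isomorphic if and only if $S$ and $T$ lie in the same coadjoint orbit.

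\emph{Existence.} It remains to show that every degree-$q$ irreducible $\chi$ equals some $M^{T}$; equivalently, that the orbit carrying $\chi$ has size $q^{2}$ and admits an associative polarization which is a pattern subalgebra. I would prove this by induction on the u-rank $n$, using the decomposition $G=M\ltimes Z$ from the proof of Theorem~\ref{inducible}, where $Z=1+\fz$ is the abelian normal subgroup attached to the last column and $M=1+\fm$ has u-rank $<n$. Clifford theory (Theorem~\ref{Clifford}) with $N=Z$ writes $\chi=\Ind_{R_{\omega}\ltimes Z}^{G}(\pi\otimes\omega)$ with $q=[M:R_{\omega}]\deg\pi$. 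When $\omega$ is trivial, $\chi$ is inflated from a degree-$q$ character of $M$, and one lifts the inductively produced pair $(\fc,T')$ to $\fb=\fc\oplus\fz$ with $T$ extending $T'$ by zero on $\fz$. When $\omega$ is nontrivial, one first replaces $T$ by a coadjoint representative making $R_{\omega}$ a pattern subgroup (exactly the normalization carried out in the proof of Theorem~\ref{inducible}), and then combines $\omega=\psi_{T_{1}}$ with the inductive datum on $R_{\omega}$ into a single functional $T$ and a codimension-one pattern subalgebra $\fb$ with $\tr T\fb^{2}=0$; irreducibility and the identification $\chi\cong M^{T}$ follow from Theorem~\ref{2.3}(3) and the Clifford description. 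In the case where the orbit of $\chi$ already has a monomial representative, Theorem~\ref{monomial} produces the inducing pattern subgroup directly and the induction is unnecessary.

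The crux, and the only place real work is required, is the existence half, and within it the two bookkeeping points that recur at each inductive step: that the stabilizer $R_{\omega}$ and the resulting inducing subgroup can be taken to be \emph{pattern} subgroups, and that the assembled functional satisfies the associative identity $\tr T\fb^{2}=0$ rather than merely the Lie-isotropy $B_{\lambda_T}(\fb,\fb)=0$. These are exactly the phenomena responsible for $G$ failing to be of good type in general; what rescues the degree-$q$ case is that $B_{\lambda_T}$ then has rank $2$, so only a single pair of conjugate root directions is constrained and the required associative polarization can always be assembled.
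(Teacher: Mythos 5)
Your reduction and uniqueness halves are correct and cleanly argued: degree $q$ forces $\codim\fb=1$; the condition $\tr T\fb^{2}=0$ makes $\fb$ isotropic for $B_{\lambda_T}$; irreducibility rules out $B_{\lambda_T}=0$ (for a pattern algebra $[\fa,\fa]=\fa^{2}$, so $B_{\lambda_T}=0$ would make $\psi_T$ extend to a linear character of $G$, which the induced representation would then contain); an alternating form admitting an isotropic subspace of codimension one has rank exactly $2$, so $\fb$ is a polarization and hence an associative polarization, and Theorem~\ref{2.3}(4) then gives the ``moreover'' clause. Note, however, that the paper quotes this theorem from \cite{Ni24} without proof, so there is no in-paper argument to compare against: your proposal has to stand on its own, and it is judged on whether the existence half is actually proved.

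It is not, and you say so yourself (``the crux, and the only place real work is required, is the existence half''). Two concrete steps are missing. First, the dichotomy implicit in $q=[M:R_{\omega}]\deg\pi$, namely $[M:R_{\omega}]\in\{1,q\}$, requires that $M$-orbits on $\widehat Z$ have $q$-power size; this is true (since $\fz\fm=0$, the orbit of $\psi_{T_{1}}$ corresponds to the affine coset $T_{1}+\{\,[T_{1}y]\mid y\in\fm\,\}$ inside $\fz^{*}$), but you never state or use it, and without it the case $q=p^{f}$, $f>1$, admits factorizations you have not handled. Second, and decisively: in the case $\deg\pi=1$, $[M:R_{\omega}]=q$, you must exhibit a \emph{pattern} subalgebra $\fb$ of codimension one and a functional $T$ with $\tr T\fb^{2}=0$ inducing $\chi$. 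The stabilizer is $R_{\omega}=1+\mathfrak{r}$ where $\mathfrak{r}$ is cut out by the linear conditions $\sum_{k}(T_{1})_{nk}y_{kj}=0$ for all $j$ with $(j,n)\in\Delta_{\fa}$; when the last row $T_{1}$ has more than one nonzero entry these are not coordinate conditions, so $\mathfrak{r}$ is an associative subalgebra but in general not a pattern subalgebra. Your proposed fix --- ``the normalization carried out in the proof of Theorem~\ref{inducible}'' --- is an unproven assertion: that normalization is designed to produce an inducible pair (whose subalgebra need not have codimension one, nor any relation to $R_{\omega}$), and you give no argument that it renders $R_{\omega}$ a pattern subgroup, nor that the assembled character $\pi\otimes\omega$ has the form $\psi_{T}$ with the associative identity $\tr T\fb^{2}=0$ rather than mere Lie-isotropy. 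Your closing heuristic --- that rank $2$ means ``only a single pair of conjugate root directions is constrained'' --- is also not correct as stated, since a rank-$2$ alternating form need not be supported on root vectors. These missing steps are precisely the combinatorial content that \cite{Ni24} must supply, so the proposal has a genuine gap in the existence direction.
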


For $G$ a Sylow $p$-subgroup of a classical group defined over a finite
 field of characteristic $p>2$ and $\fg$ its associated Lie algebra, in \cite{Sa09} Sangroniz, Josu
 showed that the irreducible characters of $G$ with sufficiently large
 degree $p^f$ can be parameterized by the coadjoint orbits of $G$ of size $p^{2f}$. Here we prove a similar result for degree $q$ characters of finite pattern groups.

\begin{thm}
Let $G=1+\fa$ be a pattern group with pattern algebra $\fa$ over the finite filed $\BF_q$. Then the number of  irreducible characters of degree $q$ equals to the number of coadjoint orbits with cardinality $q^2$.
\end{thm}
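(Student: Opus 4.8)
The plan is to promote the one-directional statement of Theorem~\ref{degreeq} into a genuine bijection between degree-$q$ irreducible characters and coadjoint orbits of cardinality $q^{2}$, with Theorem~\ref{2.3} supplying the missing converse.

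First I would define a map $\Phi$ from degree-$q$ irreducible characters to coadjoint orbits. By Theorem~\ref{degreeq}, any degree-$q$ irreducible character $\chi$ is isomorphic to $\Ind_{1+\fb}^{G}\psi_{T}$ for some $T\in\fa^{t}$ and some pattern subalgebra $\fb$ with $\tr T\fb^{2}=0$, and the moreover clause shows that the orbit $\fO_{T}$ is independent of the chosen data; set $\Phi(\chi)=\fO_{T}$. Since $\dim\chi=q^{\codim\fb}=q$ we get $\codim\fb=1$. The condition $\tr T\fb^{2}=0$ gives $B_{\lambda_{T}}(\fb,\fb)=0$, so $\fb$ is isotropic; comparing $\dim\fb=\dim\fa-1$ with the polarization dimension $\tfrac12(\dim\fa+\dim\Stab_{\fa}(T))$ of Proposition~\ref{panov}(i) forces $\dim\fO_{T}=\dim\fa-\dim\Stab_{\fa}(T)\le 2$. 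As $B_{\lambda_{T}}$ is alternating its rank $\dim\fO_{T}$ is even, and it is nonzero because $\chi$ is not linear; hence $\dim\fO_{T}=2$ and $|\fO_{T}|=q^{2}$. Equality in the dimension count also shows that $\fb$ is a maximal isotropic subalgebra annihilated by $\lambda_{T}$, that is, an associative polarization of $\lambda_{T}$.

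Injectivity of $\Phi$ then comes for free: since $\fb$ is an associative polarization, $\chi=\Ind_{1+\fb}^{G}\psi_{T}$ is exactly Panov's representation $M^{\lambda_{T}}$, and Theorem~\ref{2.3}(2),(4) say that $M^{\lambda_{T}}$ depends only on $\fO_{T}$ and that $M^{\lambda_{T}}\cong M^{\lambda_{T'}}$ precisely when $\fO_{T}=\fO_{T'}$. For surjectivity I would start from an orbit $\fO$ with $|\fO|=q^{2}$, pick a representative $T$ so that $\dim\Stab_{\fa}(T)=\dim\fa-2$, and produce an associative polarization $\fp$ of $\lambda_{T}$, namely a codimension-one subalgebra with $\tr T\fp^{2}=0$. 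Granting this, $\Ind_{1+\fp}^{G}\psi_{T}=M^{\lambda_{T}}$ is irreducible of degree $q$ by Theorem~\ref{2.3}(1),(3) and satisfies $\Phi(M^{\lambda_{T}})=\fO$, so $\Phi$ is onto the size-$q^{2}$ orbits; together with injectivity this yields the asserted equality of cardinalities.

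\textbf{The main obstacle} is the existence of an associative polarization for \emph{every} size-$q^{2}$ orbit: a general pattern group need not be of good type, so associative polarizations can fail to exist for arbitrary functionals, and I must exploit that $\dim\fO_{T}=2$ is the smallest possible nonzero orbit dimension. The relevant object is the bilinear form $\beta(x,y)=\tr(Txy)$ on $\fa$; its alternating part is $B_{\lambda_{T}}$, of rank $2$, and its diagonal vanishes in the root basis because $\fg_{\alpha}^{2}\subseteq\fg_{2\alpha}=0$ in type $A$. An associative polarization is then a multiplicatively closed hyperplane on which $\beta$ vanishes identically. I expect to build it by an induction on the u-rank parallel to the construction in Theorem~\ref{inducible}, the new ingredient being that when the orbit is two-dimensional the inducible pair can be arranged to have codimension exactly one; verifying this last point reduces, as in Lemma~\ref{lemma:codim}, to an explicit rank computation for the blocks of $T$, and that is where the real work lies.
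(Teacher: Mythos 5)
Your first half is essentially sound and runs parallel to the paper: the map from degree-$q$ characters to orbits is exactly what Theorem~\ref{degreeq} provides, its well-definedness is the ``moreover'' clause, injectivity is Theorem~\ref{2.3}(4), and your dimension count (isotropic hyperplane $\Rightarrow$ $\dim\fO_{T}\le 2$, parity $\Rightarrow$ $\dim\fO_{T}\in\{0,2\}$) nicely fills in a step the paper states without proof. One small repair: ruling out $\dim\fO_{T}=0$ ``because $\chi$ is not linear'' is not quite right --- if $B_{\lambda_{T}}=0$ the induced character still has degree $q$; the correct point is that $B_{\lambda_{T}}=0$ makes every $\Ad^{*}(g)T=T$, so by Mackey's criterion $\Ind_{1+\fb}^{G}\psi_{T}$ would be \emph{reducible}, contradicting irreducibility of $\chi$.

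The genuine gap is the surjectivity direction, which you explicitly defer as ``the main obstacle'': you must show that \emph{every} coadjoint orbit of cardinality $q^{2}$ contains a representative $Y$ admitting a codimension-one subalgebra $\fb$ with $\tr Y\fb^{2}=0$. This is precisely the content of the paper's proof, and your proposed route --- an induction on u-rank parallel to Theorem~\ref{inducible} --- is unlikely to deliver it, because inducible pairs carry no codimension control whatsoever: the subalgebra $\fb_{T}$ there is only required to have full u-rank, and in general is far from a hyperplane, whereas here the codimension must be exactly $1$. The paper instead exploits the smallness of the orbit combinatorially: $|\fO_{X}|=q^{2}$ forces $X$ to have at most two nonzero entries $x_{t,s},x_{k,m}$ at positions with $(s,t),(m,k)\in\Delta_{\fa^{2}}$, and these are then automatically arranged in a chain $(s,k),(k,t),(t,m)\in\Delta_{\fa}$. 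Deleting the single root $(k,t)$ (or, in the one-entry case, the unique intermediate root $(s,r)$) from $\Delta_{\fa}$ produces a codimension-one \emph{pattern} subalgebra $\fb$, and one can move $X$ within its orbit to a representative $Y$ whose entry at the deleted position vanishes, so that $Y\in(\fb\setminus\fb^{2})^{t}$, hence $\tr Y\fb^{2}=0$ and $\Ind_{1+\fb}^{1+\fa}\psi_{Y}$ is irreducible of degree $q$ by Theorem~\ref{2.3}. Without this (or an equivalent) construction, your argument establishes only that the number of degree-$q$ characters is at most the number of size-$q^{2}$ orbits, not the claimed equality.
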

\begin{proof}
Let $D_q$ denote the set of  irreducible characters of degree $q$ and $C_{q^2}$ the set of coadjoint orbits with cardinality $q^2.$
First, we show that for any coadjoint orbit in $X\in \fO_X \in C_{q^2}$ we can choose a representative $Y\in \fb^t \cap \fO_X, $ for some pattern subalgebra $\fb$ of $\fa$ with codimension $1$. Let $X=(x_{i,j})\in \fa^t.$

 Since $\mid \fO_X \mid=q^2,$ there are at most 2 nonzero entries $x_{t,s}, x_{k, m},$ with $s<k, $ such that $(s,t), (m,k)\in \Delta_{\fa^2}$. Then  $t <m$  and $(s, k), (k, t), (t, m)\in \Delta_{\fa}.$
 Let $\fb$ be the subalgebra of $\fa$ such that $\Delta_{\fb}=\Delta_{\fa}\setminus \{(k,t)\}.$ Then we may choose a representative 
$Y=(y_{i,j})\in \fb^t \cap \fO_X, $ with $y_{t,k}=0.$ Then $Y\in (\fb\setminus\fb^2)^t$ and $\Ind_{1+\fb}^{1+\fa}\psi_Y$ is irreducible by Theorem \ref{2.3}.

 If there is exactly 1 nonzero entry $x_{t,s}$  with $(s,t)\in \Delta_{\fa^2}$, then there exists a unique $r,$ with $s<r<t$ such that $(s,r), (r,t)\in \Delta_{\fa}.$   
  Let $\fb$ be the subalgebra of $\fa$ such that $\Delta_{\fb}=\Delta_{\fa}\setminus \{(s,r)\}.$ Then we may choose a representative 
$Y=(y_{i,j})\in \fb^t \cap \fO_X, $ with $y_{r,s}=0.$ Then $Y\in (\fb\setminus\fb^2)^t$ and $\Ind_{1+\fb}^{1+\fa}\psi_Y$ is irreducible by Theorem \ref{2.3}.

On the other hand, if $\pi$ is an irreducible character of degree $q$, then $\pi\cong\Ind_{1+\fb }^{G }\psi_T,$ for some $T\in \fb^t$ and $\fb$ is a subalgebra of $\fa$ with codimension $1$. Then $\mid \fO_T \mid=q^2.$ Hence 
 the number of  irreducible characters of degree $q$ equals the number of coadjoint orbits with cardinality $q^2.$
\end{proof}

\end{document}